\UseRawInputEncoding 
\documentclass[titlepage]{article}
\usepackage{layout}
\usepackage{url}
\usepackage{proof}
\usepackage{mathrsfs}
\usepackage{amsmath,amsthm,amssymb}
\usepackage{braket}
\usepackage{bussproofs}
\usepackage[dvipdfmx]{graphicx}%%%
\usepackage[dvipdfmx]{color}
\usepackage{mathtools}
\usepackage[all]{xy}

\theoremstyle{definition}
\newtheorem{thm}{Theorem}
\newtheorem{prop}[thm]{Proposition}
\newtheorem{lemma}[thm]{Lemma}

\newtheorem{defn}[thm]{Definition}
\newtheorem{cor}[thm]{Corollary}

\newtheorem{rmk}[thm]{Remark}

\newtheorem{convention}[thm]{Convention}

\newcommand{\NN}{\mathbb{N}}
\newcommand{\ZZ}{\mathbb{Z}}

\newcommand{\deff}[1]{\textbf{\textit{#1}}}

\DeclareMathOperator{\dom}{dom}
\DeclareMathOperator{\ran}{ran}

\title{A note on the independence of the injective pigeonhole principle from the uniform counting principle}
\author{Eitetsu Ken\footnote{email: yeongcheol.e.k@gmail.com}}
\begin{document}
\maketitle

\begin{abstract}
In this technical note, we show that the relativized bounded arithmetic $T^{1}_{2}(R)$ equipped with $UCP(\Delta^{b}_{1}(R))$ cannot prove $injPHP^{n+1}_{n}(R)$.
The result partially supports \cite{UCPvsinjPHPrevised} Conjecture 1.
%Furthermore, we show that any proofs of $injPHP^{m}_{n}$ in the second level of constant-depth Frege system needs exponential growth-rate with respect to $n$.
\end{abstract}

\section{Introduction}
This work is a direct continuation of \cite{UCPvsinjPHPrevised}.
We assume that the reader is familiar with the content of \cite{UCPvsinjPHPrevised} up to \S 3.
There, the \textit{Uniform Counting Principle} ($UCP$) is introduced to consider a possible natural generalization of the result of \cite{More}, namely, $AC^{0}$-Frege system equipped with the modular counting principle $Count^{p}_{k}$ for an arbitrary fixed modulus $p$ as an axiom scheme needs exponentially large proofs to prove the injective pigeonhole principle $injPHP^{n+1}_{n}$.
\cite{UCPvsinjPHPrevised} observed that $UCP$ implies any instance of $Count^{p}_{k}$ in $AC^{0}$-Frege system with short proofs and conjectured (casted as Question 1) that $AC^{0}$-Frege system equipped with $UCP^{l,d}_{k}$ as an axiom scheme still needs exponentially large proofs to prove $injPHP^{n+1}_{n}$.

In this note, we show that the statement holds at least for the depth $1+\frac{1}{2}$-fragment of $AC^{0}$-Frege system.
In terms of bounded arithmetics for the robustness, we show that $T^{1}_{2}(R)$ equipped with the axiom scheme $UCP^{l,d}_{k}(\varphi)$ ($\varphi$ is a $\Delta^{b}_{1}(R)$-formula in $S^{1}_{2}(R)$), denoted by $T^{1}_{2}(R)+UCP^{l,d}_{k}(\Delta^{b}_{1}(R))$, does not prove $injPHP^{n+1}_{n}(R)$.
As a corollary, we also obtain 
\[T^{1}_{2}(R)+\forall M, m.\ ontoPHP^{M}_{m}(\Delta^{b_{1}}(R)) \not \vdash \forall n.\ injPHP^{n+1}_{n}(R),\]
where $ontoPHP^{M}_{m}(\varphi)$ is a natural formalization of the statement ``If $M>m$, then $\varphi$ does not code a bijection between $[M]$ and $[m]$.'' 
The comparison between the two types of the pigeonhole principles $ontoPHP$ and $injPHP$ is interesting in its own right since it is related to the complexity of proofs of Cantor-Bernstein theorem.

Unless stated otherwise, we follow the convention and the notation of \cite{UCPvsinjPHPrevised} throughout this note.

The article is organized as follows.

In \S \ref{A proof theoretic approach}, we present a particular applied propositional sequent calculus $LK^{*}_{1+\frac{1}{2},c}(UCP)$, which is a propositional translation of the bounded arithmetic $T^{1}_{2}(R) + UCP^{l,d}_{k}(\Delta^{b}_{1}(R))$.

In \S \ref{evaluation}, we give a variation of Switching Lemma, tailored for formulae in $LK^{*}_{1+\frac{1}{2},O(1)}(UCP)$-proofs of the injective pigeonhole principle $injPHP^{n+1}_{n}$.
A tricky point here is that we can avoid having singletons in each partial injections corresponding to branches of $injPHP$-trees assigned to formulae of depth $0+\frac{1}{2}$.
This allows us to carry out an argument analogous to the one for Ajtai's theorem presented in \cite{remake} and \cite{remake2}. (See also \cite{proofcomplexity} Chapter 15 for a neat exposition with historical backgrounds.)

In \S \ref{The Main Result}, we show our main result: the independence of $injPHP^{n+1}_{n}(R)$ from $UCP^{l,d}_{k}(\Delta^{b}_{1}(R))$ in $T^{1}_{2}(R)$, following the proof strategy presented in \cite{UCPvsinjPHPrevised}. 
The argument is a straightforward modification of the proof of Theorem 53 in \cite{UCPvsinjPHPrevised}.
The proof is made somewhat simple by expanding $UCP^{l,d}_{k}$ into boolean combinations of constantly many depth $1+\frac{1}{2}$-formulae in the formulation of $LK_{1+\frac{1}{2},O(1)}^{*}(UCP)$ given in \S \ref{Preliminaries}.

In \S \ref{A model theoretic approach}, we give another proof of $T^{1}_{2}(R)+UCP^{l,d}_{k}(\Delta^{b}_{1}(R)) \not \vdash injPHP^{n+1}_{n}(R)$, following the framework in \cite{partiallydefinableforcing}.
Actually, the proof can be slightly changed to show the independence of much weaker variant of the injective pigeonhoke principle:
\[T^{1}_{2}(R) + UCP^{l,d}_{k}(\Delta^{b}_{1}(R)) \not\vdash injPHP^{univ}_{n}(R),\]
 where $injPHP^{univ}_{n}(R)$ states that ``$R$ cannot code an injection from the whole universe into $[n]$'' (Remark \ref{mostweakPHPisindependent}).

\section{Preliminaries}\label{Preliminaries}

\textit{Uniform Counting Principle} is defined as follows:
\begin{defn}\label{UCP}
$UCP(l,d,n,R)$ (which stands for \textit{Uniform Counting Principle}) is an $\mathcal{L}^{2}_{A}$ formula defined as follows (note that the definition of $UCP$ in \cite{UCPvsinjPHPrevised} has a typo on bracketing):
\begin{align*}
(d \geq 1 \land \lnot d \mid n ) \rightarrow \lnot
&\bigg( \forall i \in [l]. (\forall j\in [d]. \exists e \in [n]. R(i,j,e)  \lor \forall j \in [d]. \lnot \exists e \in [n]. R( i,j,e )) \\
&\land \forall (i,j) \in [l]\times[d]. \forall e \neq e^{\prime} \in [n] (\lnot R( i,j,e) \lor \lnot R( i,j,e^{\prime}))\\ 
&\land \forall (i,j) \neq (i^{\prime},j^{\prime}) \in [l] \times [d]. \forall e \in [n]. (\lnot R(i,j,e ) \lor \lnot R( i^{\prime},j^{\prime},e ))\\
&\land \forall e \in [n]. \exists (i,j) \in [l] \times [d]. R( i,j,e ) \bigg)
\end{align*}
%Here, $\langle \cdot \rangle$ denotes a $\Sigma^{B}_{0}$-definable tupling function which is monotone for each input. 

With some trivial manipulation on the quantifiers, it can be equivalently formulated as follows:
\begin{align*}
(d \geq 1 \land \lnot d \mid n ) \rightarrow \lnot
&\bigg( \forall i \in [l]. \forall j\in [d]. ( \exists e \in [n]. R(i,j,e)  \lor \forall j' \in [d]. \forall e' \in [n]. \lnot R( i,j',e' )) \\
&\land \forall (i,j) \in [l]\times[d]. \forall e \neq e^{\prime} \in [n] (\lnot R( i,j,e) \lor \lnot R( i,j,e^{\prime}))\\ 
&\land \forall (i,j) \neq (i^{\prime},j^{\prime}) \in [l] \times [d]. \forall e \in [n]. (\lnot R(i,j,e ) \lor \lnot R( i^{\prime},j^{\prime},e ))\\
&\land \forall e \in [n]. \exists (i,j) \in [l] \times [d]. R( i,j,e ) \bigg)
\end{align*}

The propositional formula $UCP^{l,d}_{n}$ is defined as follows:
\begin{align*}
UCP^{l,d}_{n}:=
\begin{cases}
\lnot \bigg(\bigwedge_{i=1}^{l}\left( \left(\bigwedge_{j=1}^{d} \bigvee_{e \in [n]} r_{i,j,e} \right) \lor \left( \bigwedge_{j=1}^{d} \lnot \bigvee_{e \in [n]} r_{i,j,e} \right)\right) \\
\land \bigwedge_{(i,j) \in [l] \times [d]} \bigwedge_{e \neq e^{\prime} \in [n]} (\lnot r_{i,j,e} \lor \lnot r_{i,j,e^{\prime}})\\ 
\land \bigwedge_{(i,j) \neq (i^{\prime},j^{\prime}) \in [l] \times [d]} \bigwedge_{e \in [n]} (\lnot r_{i,j,e} \lor \lnot r_{i^{\prime},j^{\prime},e})\\
\land \bigwedge_{e \in [n]} \bigvee_{(i,j) \in [l] \times [d]} r_{i,j,e} \bigg) 
\quad (\mbox{if $n \not \equiv 0 \pmod d$, $d \geq 1$})\\
1 \quad (\mbox{otherwise})
\end{cases}
\end{align*}
We abuse the notation and use $UCP^{l,d}_{n}$ to express $UCP(l,d,n,R)$. (See also \cite{UCPvsinjPHPrevised} Convention 3.)
\end{defn}

\section{A proof theoretic approach}\label{A proof theoretic approach}
The theory $T^{1}_{2}(R)+UCP^{l,d}_{k}(\Delta^{b}_{1}(R))$ can be expressed as an applied sequent calculus dealing with formulae of negation normal form: (we follow the convention and treatment of one-sided sequent caluculi in \cite{OrdinalAnalysis} \S 2.1.2)
\begin{defn}
Let $\bold{G}+T^{1}_{2}(R)+UCP^{l,d}_{k}$ be an applied sequent calculus whose proof tree obeys the following derivation rules:  
 \begin{itemize}
 \item Initial Sequent;
 
 \begin{prooftree}
 \AxiomC{}  \RightLabel{\quad (where $L$ is a literal)}
  \UnaryInfC{$\Gamma, L, \overline{L}$}

\end{prooftree}

 \item $\lor$-Rule; 
   \begin{prooftree}
 \AxiomC{$\Gamma, \varphi_{i_{0}}$} \RightLabel{\quad (where $\varphi_{1} \lor \varphi_{2} \in \Gamma$, $i_{0}=1,2$)}
  \UnaryInfC{$\Gamma$}
 \end{prooftree}

   \item $\exists$-Rule: 
   \begin{prooftree}
 \AxiomC{$\Gamma, \varphi(u)$}
  \RightLabel{\quad (where $\exists x. \varphi(x) \in \Gamma$)}
  \UnaryInfC{$\Gamma$}
 \end{prooftree}

 \item $\exists^{\leq}$-Rule; 
   \begin{prooftree}
 \AxiomC{$\Gamma, \varphi(u)$}
 \AxiomC{$\Gamma, u \leq t$}
  \RightLabel{\quad (where $\exists x \leq t. \varphi(x) \in \Gamma$, and $u$ is a term)}
  \BinaryInfC{$\Gamma$}
 \end{prooftree}
 
 \item $\land$-Rule;
    \begin{prooftree}
 \AxiomC{$\Gamma, \varphi_{1}$}
 \AxiomC{$\Gamma, \varphi_{2}$}
   \RightLabel{\quad (where $\varphi_{1} \land \varphi_{2} \in \Gamma$)}
  \BinaryInfC{$\Gamma$}
 \end{prooftree}

 \item $\forall$-Rule:
    \begin{prooftree}
 \AxiomC{$\Gamma, \varphi(a)$}
   \RightLabel{\quad (where $\forall x. \varphi(x) \in \Gamma$, and $a$ is an eigenvariable)}
  \UnaryInfC{$\Gamma$}
 \end{prooftree}

 \item $\forall^{\leq}$-Rule;
    \begin{prooftree}
 \AxiomC{$\Gamma, \overline{a \leq t}, \varphi(a)$}
   \RightLabel{\quad (where $\forall x \leq t. \varphi(x) \in \Gamma$, and $a$ is an eigenvariable)}
  \UnaryInfC{$\Gamma$}
 \end{prooftree}

 \item Axiom of $T^{1}_{2}(R)$;
  \begin{prooftree}
 \AxiomC{$\Gamma, \overline{\varphi}$} \RightLabel{\quad (where $\varphi$ is a substitution instance of one of open axioms of $T^{1}_{2}(R)$)}
  \UnaryInfC{$\Gamma$}
 \end{prooftree}

 \item $\Sigma^{b}_{1}(R)$-Induction;
     \begin{prooftree}
 \AxiomC{$\Gamma, \varphi(0)$}
 \AxiomC{$\Gamma, \overline{\varphi(a)},\varphi(a+1)$}
  \AxiomC{$\Gamma, \overline{\varphi(t)}$}
  \RightLabel{\quad (where $\varphi \in \Sigma^{b}_{1}(R)$, $a$ is an eigenvariable)}
  \TrinaryInfC{$\Gamma$}
 \end{prooftree}
 
 \item $UCP$;
 
deriving $\Gamma$ from the following six premises (here, $\psi$ is a $\Delta^{b}_{1}(R)$-formula in $S^{1}_{2}(R)$):
\begin{enumerate}
\item $\Gamma, d \geq 1 \land \lnot d \mid n $
\item$\Gamma,\forall i \in [l]. \forall j\in [d].(\exists e \in [n]. \psi(i,j,e)  \lor \forall j' \in [d].  \forall e' \in [n]. \lnot \psi( i,j',e' ))$
\item $\Gamma, \forall (i,j) \in [l]\times[d]. \forall e \neq e^{\prime} \in [n] (\lnot \psi( i,j,e) \lor \lnot \psi( i,j,e^{\prime}))$
\item $\Gamma, \forall (i,j) \neq (i^{\prime},j^{\prime}) \in [l] \times [d]. \forall e \in [n]. (\lnot \psi(i,j,e ) \lor \lnot \psi( i^{\prime},j^{\prime},e ))$
\item{$\Gamma, \forall e \in [n]. \exists (i,j) \in [l] \times [d]. \psi( i,j,e )$}
\end{enumerate}
\end{itemize}
\end{defn}

With Paris-Wilkie translation in mind, a propositional translation of the above first-order calculus can be given as follows: 

\begin{convention}
as for propositional logic, we adopt $\lnot$, binary $\lor,\land$ and unbounded $\bigvee, \bigwedge$ as propositional connectives, $0$ and $1$ as propositional constants, and consider only propositional formulae of negation normal form.
Given a formula $\varphi$, $\overline{\varphi}$ denotes the canonical negation normal form of $\lnot \varphi$. It is called \deff{the complement of $\varphi$}.

For a propositional formula $\varphi$, $|\varphi|$ denotes the size of $\varphi$, say, the number of occurrences of variables and connectives in $\varphi$. 
The precise definition does not matter as long as the conventions are polynomially related. See also Chapter 1 of \cite{proofcomplexity}.

\end{convention}

\begin{defn}
A propositional formula $\varphi$ is $p\Sigma_{i+\frac{1}{2}}(z)$ ($i,z \in \NN$) if and only if it has the following form:
\[
\varphi = \underbrace{\bigvee_{j_{1} \in J_{1}}\bigwedge_{j_{2} \in J_{2}}\cdots}_{\mbox{exactly} \ i\  \mbox{-times}}\psi_{j_{1},\ldots, j_{i}},
\]
 where
 \[ |\varphi| \leq z \ \& \ \ \forall k \in [i].\ J_{k} \neq \emptyset \]
 and each $\psi_{\vec{j}}$ is $\bigvee$- and $\bigwedge$-free and satisfies $|{\psi_{\vec{j}}}| \leq \log z$.
 Note that if $\varphi$ is $p\Sigma_{i+\frac{1}{2}}(z)$, then such $i$ is unique since we distinguish $\bigvee, \bigwedge$ from $\lor, \land$.
 $p\Pi_{i+\frac{1}{2}}(z)$ is defined similarly, switching the roles of $\bigvee$ and $\bigwedge$.
 
 We say $\varphi$ is $s\Sigma_{i+\frac{1}{2}}(z)$ if it is $p\Sigma_{i'+\frac{1}{2}}(z)$ for some $0 \leq i^{\prime} \leq i$, or it is $p\Pi_{i'+\frac{1}{2}}(z)$ for some $0 \leq i^{\prime} < i$.
 Similarly for $s\Pi_{i+\frac{1}{2}}(z)$. 
\end{defn}

 We are particularly interested in the case $z=2^{|n|^{O(1)}}$ for parameters $n$ and $i \leq 2$.

\begin{defn}\label{cedent calculus}
\deff{A cedent} is a finite set of propositional formulae. 
Given a cedent $\Gamma$, its \deff{semantic interpretation} is the propositional formula $\bigvee_{\varphi \in \Gamma}\varphi$. 
Here, if $\Gamma=\emptyset$, we set $\bigvee_{\varphi \in \Gamma}\varphi:=0$.
Under a truth assignment, $\Gamma$ is said to be \deff{true} if and only if its semantic interpretation is true.

We often denote cedents of the form 
\[\Gamma_{1} \cup \ldots \cup \Gamma_{k}\cup \{\varphi_{1},\ldots, \varphi_{m}\}\]
 by 
 \[\Gamma_{1}, \ldots, \Gamma_{k}, \varphi_{1},\ldots, \varphi_{m}.\]
\end{defn}

Given constants $c,d$, a cedent $S$ and a finite vertex-labeled tree $\pi=(\mathcal{T},\mathcal{S})$, $\pi$ is \deff{a $LK^{*}_{1+\frac{1}{2},c}(UCP)$-derivation of $S$ (without redundancy)} if and only if the following hold:
\begin{enumerate}
 \item $height(\mathcal{T}) \leq c$.
 \item For each $v \in \mathcal{T}$, $\mathcal{S}(v)$ is a cedent of cardinality $\leq c$.
 \item $\mathcal{S}(\emptyset) = S$.
 \item For each $v \in \mathcal{T}$, $\mathcal{S}(v)$ is derived from the labels of its children, that is, $(\mathcal{S}(v*i))_{v*i \in \mathcal{T}}$ by applying one of the following derivation rules:
 \begin{itemize}
 \item Initial cedent:
 
 \begin{prooftree}
 \AxiomC{}  \RightLabel{\quad (where $x$ is a constant or a variable)}
  \UnaryInfC{$\Gamma, x, \overline{x}$}

\end{prooftree}
 
 \item $\bigvee$-Rule: 
   \begin{prooftree}
 \AxiomC{$\Gamma, \varphi_{i_{0}}$} \RightLabel{\quad (where $\bigvee_{i=1}^{I}\varphi_{i} \in \Gamma$, $1\leq i_{0} \leq I$, $\varphi_{i_{0}} \not \in \Gamma$)}
  \UnaryInfC{$\Gamma$}
 \end{prooftree}
 
\item $\lor$-Rule: 
   \begin{prooftree}
 \AxiomC{$\Gamma, \varphi_{i_{0}}$} \RightLabel{\quad (where $\varphi_{1} \lor \varphi_{2} \in \Gamma$, $i_{0}=1$ or $i_{0}=2$, $\varphi_{i_{0}} \not \in \Gamma$)}
  \UnaryInfC{$\Gamma$}
 \end{prooftree}

 \item $\bigwedge$-Rule:
    \begin{prooftree}
 \AxiomC{$\Gamma, \varphi_{1}$}
 \AxiomC{$\Gamma, \varphi_{2}$}
 \AxiomC{$\cdots$}
 \AxiomC{$\Gamma, \varphi_{I}$}
   
  \QuaternaryInfC{$\Gamma$}
 \end{prooftree}
 where $\bigwedge_{i=1}^{I}\varphi_{i} \in \Gamma$, and $\varphi_{i} \not \in \Gamma$ for each $i \in [I]$.
 
 \item $\land$-Rule:
    \begin{prooftree}
 \AxiomC{$\Gamma, \varphi_{1}$}
 \AxiomC{$\Gamma, \varphi_{2}$}
  \BinaryInfC{$\Gamma$}
 \end{prooftree}
 where $\varphi_{1} \land \varphi_{2} \in \Gamma$, and $\varphi_{i} \not \in \Gamma$ for each $i=1,2$.

 \item Trivial Cut:
  \begin{prooftree}
 \AxiomC{$\Gamma, 0$} \RightLabel{\quad (where $0 \not \in \Gamma$)}
  \UnaryInfC{$\Gamma$}
 \end{prooftree}

 \item $p\Sigma_{1+\frac{1}{2}}$-Induction:
     \begin{prooftree}
 \AxiomC{$\Gamma, \varphi_{1}$}
 \AxiomC{$\Gamma, \overline{\varphi_{1}},\varphi_{2}$}
 \AxiomC{$\cdots$}
 \AxiomC{$\Gamma, \overline{\varphi_{I-1}},\varphi_{I}$}
  \AxiomC{$\Gamma, \overline{\varphi_{I}}$}
  \QuinaryInfC{$\Gamma$}
 \end{prooftree}
 where each $\varphi_{i}$ is $p\Sigma_{1+\frac{1}{2}}(|{\pi}|)$, and $\varphi_{i}, \overline{\varphi}_{i}  \not \in \Gamma$ for $i \in [I]$.
 Here, we define the size $|{\pi}|$ of the proof $\pi$ as 
 \[|{\pi}| := \sum_{v \in \mathcal{T}} \sum_{\varphi \in \mathcal{S}(v)} |{\varphi}|.\]
Note that, when $I=1$, $p\Sigma_{1+\frac{1}{2}}$-Induction is a usual cut-rule for $p\Sigma_{1+\frac{1}{2}}(|{\pi}|)$-formulae.

\item\label{UCP-axiom} (Propositional) $UCP$-axiom:
deriving $\Gamma$ from the following four kinds of premises, where $k \not \equiv 0 \pmod d$, $d \geq 1$, $l \geq 1$, and each $\theta_{i,j,e}$ is $p\Sigma_{0+\frac{1}{2}}$-formulae:
\begin{enumerate}
\item $\Gamma, \bigvee_{e \in [k]} \theta_{i,j,e} \lor \bigwedge_{e' \in [k], j' \in [d]} \lnot \theta_{i,j',e'}$ \quad ($(i,j) \in [l] \times [d]$).
\item $\Gamma, \lnot \theta_{i,j,e} \lor \lnot \theta_{i,j,e^{\prime}}$ \quad ($(i,j) \in [l] \times [d], e \neq e^{\prime} \in [k]$).
%\item $\Gamma,\forall i \in [l]. \forall j\in [d].\forall j' \in [d]. (\exists e \in [n]. R(i,j,e)  \lor \forall e' \in [n]. \lnot R( i,j',e' ))$
%\item $\Gamma, \forall (i,j) \in [l]\times[d]. \forall e \neq e^{\prime} \in [n] (\lnot R( i,j,e) \lor \lnot R( i,j,e^{\prime}))$
\item $\Gamma, \lnot \theta_{i,j,e} \lor \lnot \theta_{i^{\prime},j^{\prime},e}$ \quad ($(i,j) \neq (i^{\prime},j^{\prime}) \in [l] \times [d]$, $e \in [k]$).
%\item $\Gamma, \forall (i,j) \neq (i^{\prime},j^{\prime}) \in [l] \times [d]. \forall e \in [n]. (\lnot R(i,j,e ) \lor \lnot R( i^{\prime},j^{\prime},e ))$
\item $\Gamma, \bigvee_{(i,j) \in [l] \times [d]} \theta_{i,j,e}$ \quad ($e \in [k]$).
%\item{$\Gamma, \forall e \in [n]. \exists (i,j) \in [l] \times [d]. R( i,j,e )$}
\end{enumerate}

\end{itemize}
\end{enumerate}

It is a routine to check the following with the idea of Paris-Wilkie translation:
\begin{prop}
If $T^{1}_{2}(R)+UCP^{l,d}_{k}(\Delta^{b}_{1}(R)) \vdash injPHP^{n+1}_{n}(R)$, then $injPHP^{n+1}_{n}$, expressed as the cedent
\begin{align*}
\left\{ \bigvee_{p \in [n+1]}\bigwedge_{h \in [n]} \lnot r_{ph}, \bigvee_{\substack{p \neq p^{\prime} \in [n+1],\\ h \in [n]}} (r_{ph} \land r_{p^{\prime}h}),
 %\bigvee_{h \in [n]}\bigwedge_{p \in [n+1]} \lnot r_{ph}, 
 \bigvee_{\substack{h \neq h^{\prime} \in [n],\\ p \in [n+1]}} (r_{ph} \land r_{ph^{\prime}}) \right\},
\end{align*}
 has $2^{|n|^{O(1)}}$-sized $LK^{*}_{1+\frac{1}{2},O(1)}(UCP)$-derivation.
\end{prop}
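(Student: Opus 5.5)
The plan is the standard Paris--Wilkie route. We start from a first-order proof and make it cut-free apart from bounded-complexity cuts. Then we translate it node by node, at a fixed standard model $[0,n]$, into the propositional calculus.

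\textbf{Step 1: normalize the first-order proof.} Suppose $T^{1}_{2}(R)+UCP^{l,d}_{k}(\Delta^{b}_{1}(R)) \vdash injPHP^{n+1}_{n}(R)$. We present this proof in the one-sided calculus $\bold{G}+T^{1}_{2}(R)+UCP^{l,d}_{k}$. By free-cut elimination, we may assume every formula occurring in the proof is either a subformula of the end-sequent, of an axiom instance, of an induction formula, or of a $UCP$-premise. Hence all formulae are $\Sigma^{b}_{1}(R)$, $\Pi^{b}_{1}(R)$, or sharply bounded combinations of these, up to a fixed finite stock determined by the proof. The $\Delta^{b}_{1}(R)$-formula $\psi$ in a $UCP$-instance is replaced by its $\Sigma^{b}_{1}$ form, with the $\Pi^{b}_{1}$ equivalent kept as a side derivation. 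Then, using the $S^{1}_{2}(R)$-provable equivalence of the two forms, which is itself a $\Sigma^{b}_{1}$-induction-free cut, $\psi$ enters the premises only through its $\Sigma^{b}_{1}$ and $\Pi^{b}_{1}$ forms.

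\textbf{Step 2: translate.} Fix $n$. Each first-order formula $\varphi(\vec a)$, with its free variables assigned values at most $2^{|n|^{O(1)}}$, is mapped to a propositional formula $\langle\varphi\rangle$. Bounded quantifiers become $\bigvee/\bigwedge$ over the $2^{|n|^{O(1)}}$ values. Sharply bounded quantifiers and atoms also become $\bigvee/\bigwedge$, but the innermost sharply bounded open part, being over $|n|^{O(1)}$ values, can be written with binary $\lor,\land$. This gives size $\le \log z$ bottom formulas, so a $\Sigma^{b}_{1}(R)$ formula becomes $p\Sigma_{1+\frac12}(2^{|n|^{O(1)}})$. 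Atoms not involving $R$ evaluate to the constants $0$ or $1$. The proof is then unfolded: each first-order rule instance becomes a constant-height block of propositional rules.

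\textbf{Step 3: match the rules and bound the tree.} The rules translate as follows.
\begin{itemize}
\item $\forall^{\le}$ becomes $\bigwedge$-rules, and $\exists^{\le}$ becomes $\bigvee$-rules; a false side premise $u\le t$ is handled by trivial cut.
\item True open axioms are derived from the constant $1$ via initial cedents.
\item $\Sigma^{b}_{1}$-induction up to $t$ becomes a single $p\Sigma_{1+\frac12}$-Induction inference with $I = t+1$ premises $\varphi(0),\dots,\varphi(t)$. This is exactly why the induction rule is allowed unbounded fan-in, so the height stays $O(1)$.
\item A $UCP$ instance becomes the propositional $UCP$-axiom with $\theta_{i,j,e}$ the translation of the matrix of $\psi$. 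If $d\mid k$ or $d=0$, premise 1 is false, and we instead derive $\Gamma$ from the translation of that premise by trivial cuts.
\end{itemize}
Height and cedent cardinality are bounded by constants depending only on the first-order proof. The tree has branching at most $2^{|n|^{O(1)}}$ and constant height, so the total size is $2^{|n|^{O(1)}}$.

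\textbf{Main obstacle.} The delicate step is making the $UCP$ premises fit the required format, in which each $\theta_{i,j,e}$ is $p\Sigma_{0+\frac12}$, that is, small and $\bigvee/\bigwedge$-free. A general $\Delta^{b}_{1}(R)$ formula $\psi$ translates only to a $p\Sigma_{1+\frac12}$ formula.

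My first attempt is a variable-introduction trick. Replace $\psi(i,j,e)$ by an instance in which $e$ ranges over pairs $(e,w)$ with $w$ a witness of the $\Sigma^{b}_{1}$ form. We want $\theta_{i,j,(e,w)}$ to be the open matrix, which is $p\Sigma_{0+\frac12}$, and we use $\Sigma^{b}_{1}$-induction (least-witness selection, available in $T^{1}_{2}$) to keep only the least witness. This makes the uniqueness premises (b) and (c) hold, and the new size $k'$ is chosen as $k$ times the witness range.

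The catch is that the non-divisibility condition $d\nmid k'$ must be preserved. I would try to get this by padding with dummy blocks; if that fails, I would fall back on showing that $T^{1}_{2}(R)$ proves the reduction of $UCP(\Delta^{b}_{1})$ to $UCP$ with open matrices. Once this is done, the rest is routine bookkeeping.
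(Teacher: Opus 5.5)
Your overall route is the one the paper has in mind: free-cut elimination, then Paris--Wilkie translation at parameter $n$, with each $\Sigma^{b}_{1}(R)$-induction becoming a single application of the unbounded-fan-in $p\Sigma_{1+\frac{1}{2}}$-Induction rule and false side conditions discharged by trivial cuts. The paper only asserts that the proposition is a routine Paris--Wilkie check and never discusses the point you isolate. As it stands, though, your proposal is not a proof, because it stops exactly at that point. The propositional $UCP$-axiom requires every $\theta_{i,j,e}$ to be $p\Sigma_{0+\frac{1}{2}}$, that is, $\bigvee$/$\bigwedge$-free of size at most $\log|\pi| = |n|^{O(1)}$. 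Such a formula mentions only $|n|^{O(1)}$ variables. A $\Delta^{b}_{1}(R)$-formula, by the witnessing theorem, is a polytime oracle computation, i.e.\ a decision tree of height $|n|^{O(1)}$, and it can depend on $n^{\Omega(1)}$ of the variables $r_{ph}$. So no translation of $\psi$ itself will work. For the calculus as defined, you must replace the instance by one with a sharply bounded matrix, and $T^{1}_{2}(R)$ must prove that the premises for $\psi$ imply the premises for the new instance.

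Your variable-introduction trick does not achieve this, for two reasons.

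\emph{Least witness is not sharply bounded.} The condition ``$w$ is the least witness'' contains the bounded quantifier $\forall w'<w$, which ranges over up to $2^{|n|^{O(1)}}$ values. So $\theta_{i,j,(e,w)}$ translates to a $\bigwedge$ of quasipolynomial fan-in, not to a $p\Sigma_{0+\frac{1}{2}}$-formula. You would need a witness that is unique for syntactic reasons, such as the full computation of an oracle machine deciding $\psi$.

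\emph{Totality fails.} This is the more serious problem. Even with unique witnesses, the new instance violates premise (d): in the universe $[k]\times[W]$, every pair $(e,w)$ with $w$ not the selected witness is covered by no $(i,j)$. This is not a divisibility issue, and padding with dummy blocks does not fix it. You would have to partition all non-witness pairs into $d$-blocks using sharply bounded formulas that cannot locate the witness, while keeping the total size $\not\equiv 0 \pmod{d}$. The obvious device is an alphabet of size $d+1$, grouping invalid computations by their first deviation. That disposes of the invalid computations, but it leaves the valid rejecting computations for $(i,j)\neq(i_{e},j_{e})$. Their number is $\equiv -k \pmod{d}$, so the residue problem just moves.

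Your fallback, a $T^{1}_{2}(R)$-proof that sharply bounded instances of $UCP$ imply its $\Delta^{b}_{1}(R)$-instances, is exactly the missing lemma, and the proposal does not supply it. A more promising repair works at the level of the calculus. The paper's lower-bound argument essentially only needs each $\theta_{i,j,e}$ to be decided by an $injPHP$-tree of polylogarithmic height (compare the lemma on shallow trees deciding $\Delta^{b}_{1}(R)$-formulas in the model-theoretic section). If the axiom admitted such decision trees as the $\theta_{i,j,e}$, the $UCP$ rule would translate directly. That, however, changes the calculus rather than proving the statement as given.
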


\begin{rmk}[The subformula-property of $LK^{*}_{1+\frac{1}{2},c}(UCP)$-derivations]
Note that $LK^{*}_{1+\frac{1}{2},c}(UCP)$-derivations of $injPHP^{n+1}_{n}$ consist only of:
\begin{enumerate}
 \item subformulae of $injPHP^{n+1}_{n}$.
 \item formulae $\bigvee_{e \in [k]} \theta_{i,j,e} \lor \bigwedge_{e' \in [k]} \lnot \theta_{i,j',e'}$ from $UCP$-axiom.
 \item $s\Sigma_{1+\frac{1}{2}}$- or $s\Pi_{1+\frac{1}{2}}$-formulae. 
 \end{enumerate}
\end{rmk}

In the next sections, we deny the existence of such short $LK^{*}_{1+\frac{1}{2},c}(UCP)$-derivations of $injPHP^{n+1}_{n}$, which results in the independence 
\[T^{1}_{2}(R)+UCP^{l,d}_{k}(\Delta^{b}_{1}(R)) \not\vdash injPHP^{n+1}_{n}(R).\]

\section{$o(n)$-evaluation for depth $1+\frac{1}{2}$}\label{evaluation}
This section is based on the notions defined in \cite{UCPvsinjPHPrevised} \S 3.
We first show the following theorem (cf. Definition 48 in \cite{UCPvsinjPHPrevised}):
\begin{thm}\label{k-evaluationexists}
There exists a positive constant $\epsilon$ such that, for an arbitrary sufficiently large natural number $n$ and a subformula-closed family $\Gamma$ of $\{r_{ij}\}_{i \in [n+1], j \in [n]}$-formulae within depth $1+\frac{1}{2}$ (that is, $s\Sigma_{1+\frac{1}{2}}$- or $s\Pi_{1+\frac{1}{2}}$-formulae), if $\#\Gamma < 2^{n^{\epsilon}}$, then there exists a partial \textbf{bijection} $\rho \in \mathcal{M}^{n+1}_{n}$ of size $n-n^{\epsilon}$ such that $\Gamma^{\rho}$ has an $o(n^{\epsilon})$-evaluation with $injPHP$-trees.
\end{thm}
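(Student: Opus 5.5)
We would follow the standard random-restriction argument for Ajtai's theorem, as presented in \cite{remake}, \cite{remake2} and \cite{proofcomplexity} Chapter 15, specialised to depth $1+\frac{1}{2}$. We take the $k$-evaluation in the sense of Definition 48 of \cite{UCPvsinjPHPrevised}, with $k=o(n^{\epsilon})$, and with $injPHP$-trees relative to the residual problem on the unmatched pigeons and holes. The construction runs bottom-up in two stages.

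\emph{Stage 1: the depth $0+\frac{1}{2}$ layer.} Each bottom formula $\psi_{\vec{j}}$ has size $\leq \log z$ with $z<2^{n^{\epsilon}}$, so it mentions at most $n^{\epsilon}$ variables $r_{ph}$. We represent it by a decision tree that queries the relevant pigeons and holes; this gives an $injPHP$-tree of height $O(n^{\epsilon})$. Already here we will use that $\rho$ is a partial \textbf{bijection} (no pigeon or hole is mapped into a "dangling" position) rather than a general partial injection. The aim is to arrange that, under $\rho$, the branches of these trees never contain singletons, i.e. branches touching a pigeon or hole outside the residual board. This is the "tricky point" mentioned in the introduction. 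Concretely, we first hit $\Gamma$ with a random partial bijection $\rho_{1}$ of size $n-n^{\delta}$ for a suitable $\delta$. Each such literal-level formula then either collapses or depends only on $O(1)$ residual pigeons and holes.

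\emph{Stage 2: the $\bigvee\bigwedge$ layer.} We treat each $\bigwedge$ of small $injPHP$-trees as a matching-disjunction/conjunction, i.e. an $r$-disjunction in the PHP sense. We then apply the PHP switching lemma (the version of \cite{proofcomplexity} Chapter 15, adapted to partial bijections on $[n+1]\times[n]$) with a second random partial bijection $\rho_{2}$, leaving about $n^{\epsilon}$ holes. The lemma gives the following bound: the probability that a given $\bigvee$ of such terms fails to be represented by an $injPHP$-tree of height $<s$ is at most roughly $(O(r^{2}n^{\epsilon'})/\sqrt{n})^{s}$. Taking $s=o(n^{\epsilon})$, for instance $s=n^{\epsilon/2}$, and $\epsilon$ small relative to the exponents in the switching lemma makes this failure probability at most $2^{-n^{2\epsilon}}$. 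A union bound over $\#\Gamma<2^{n^{\epsilon}}$ formulae then yields a single $\rho=\rho_{1}\rho_{2}$ of size $n-n^{\epsilon}$ that works for every formula at once; extending $\rho$ to exactly that size is harmless.

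Finally we verify the $k$-evaluation conditions for $\Gamma^{\rho}$. Literals receive their one-query trees. Disjunctions (and $\lor$ of bottom pieces) receive the trees from the switching lemma, and conjunctions receive the complement trees. The consistency condition between a formula and its immediate subformulae is checked as in \cite{UCPvsinjPHPrevised} Theorem 53: each branch of a formula's tree must force, via $\rho$-extension by partial matchings of size $\leq k$, the corresponding values of the subformula trees. Here the no-singleton property from Stage 1 is what keeps the branch-wise compatibility clean. The main obstacle is Stage 1 combined with the switching lemma for partial \emph{bijections} rather than injections. The residual board is $(n^{\epsilon}+1)\times n^{\epsilon}$, so the usual counting of restrictions has to be redone for the bijective distribution. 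If the direct count is awkward, we would first sample a random partial injection and then argue that conditioning on bijectivity costs at most a polynomial factor in the probability estimate, which the exponential union bound absorbs.
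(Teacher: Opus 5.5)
\textbf{There is a genuine gap.} You correctly sense that singletons must be kept out, but you misidentify what they are, and your mechanism does not remove them. In an $injPHP$-tree a singleton $\langle h\rangle$ is the answer ``hole $h$ is empty'' to a hole query. It is not a pigeon or hole outside the residual board. Your Stage~1 trees ``query the relevant pigeons and holes'', so their $1$-branches will in general contain such singletons, and no preliminary restriction $\rho_{1}$ changes that.

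The paper's device is to decide each $p\Sigma_{0+\frac{1}{2}}$-formula $\theta$ by querying \emph{only the pigeons} occurring in $\theta$; each $r_{ph}$ is determined by the image of $p$. Then every term $\tau\in br_{1}(T(\psi_{i}))$ fed into the canonical tree at depth $1+\frac{1}{2}$ is a partial bijection. This is exactly what the counting needs. In the encoding $\boldsymbol{\rho}\mapsto(\chi,\vec{\beta},\vec{\gamma})$ of a bad restriction, $\chi=\boldsymbol{\rho}\tau_{1}\cdots\tau_{l}$ is again a partial bijection, of size $n-n^{\epsilon}+r$ with $r\geq s/2q-1$. The ratio $\#\mathcal{B}^{n}_{n-n^{\epsilon}+r}/\#\mathcal{B}^{n}_{n-n^{\epsilon}}\leq\bigl(n^{\epsilon}(n^{\epsilon}+1)/(n-n^{\epsilon})\bigr)^{r}$ then beats the $2^{s}n^{2\epsilon s}$ spent on $\vec{\beta},\vec{\gamma}$. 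A singleton in some $\tau_{i}$ would instead contribute a factor of order $n^{\epsilon}$ (the number of free holes) rather than of order $n^{2\epsilon-1}$, and the count collapses. The paper's closing remark on depth $2+\frac{1}{2}$ shows that a canonical tree over singleton terms is genuinely not shallow.

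So your Stage~2 appeal to ``the PHP switching lemma adapted to partial bijections'' is precisely the unproved step. The standard lemma is for trees whose queries are always answered by matching edges. For $injPHP$-trees, where a hole query may be answered ``empty'', it goes through only once the terms are singleton-free. The obstacle you name instead, bijective versus injective distribution of $\boldsymbol{\rho}$, is not the issue: the paper samples $\boldsymbol{\rho}$ uniformly from $\mathcal{B}^{n}_{n-n^{\epsilon}}$ and counts directly.

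\textbf{Two secondary problems.} First, Stage~1 is unnecessary, and as stated it fails under a union bound. The bottom formulae have size $q=(\log n)^{O(1)}$, so pigeon-querying trees already have polylogarithmic height. Meanwhile, the probability that a fixed bottom formula keeps at least $c$ free pigeons under $\rho_{1}$ is about $(qn^{\delta-1})^{c}$. That is only polynomially small for constant $c$, so ``$O(1)$ residual dependence'' cannot hold simultaneously for superpolynomially many formulae.

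Second, your own bound $(O(r^{2}n^{\epsilon'})/\sqrt{n})^{s}$ with $s=n^{\epsilon/2}$ is at least about $2^{-\frac{1}{2}n^{\epsilon/2}\log n}$. It cannot be $\leq 2^{-n^{2\epsilon}}$, so your union bound over $2^{n^{\epsilon}}$ formulae, with bottom formulae of size up to $n^{\epsilon}$, is unsupported. The paper obtains $2^{-\Omega(n^{\epsilon/3})}$. It union-bounds only over the $2^{|n|^{O(1)}}$ formulae of a quasi-polynomial-size derivation, with $q$ polylogarithmic so that the exponent $s/2q-1$ is large.
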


\begin{defn}
For $n \geq s$, let $\mathcal{B}^{n}_{s}$ be the set of all the partial \textbf{bijections} between $[n+1]$ and $[n]$ with size $s$. 
\end{defn}

Then we obtain:

\begin{cor}\label{k-evaluationforproofs}
 Suppose $injPHP^{n+1}_{n}$ has $2^{|n|^{O(1)}}$-sized $LK^{*}_{1+\frac{1}{2},c}(UCP)$-derivation.
 Let $\pi_{n}$ ($n \geq 1$) be such derivations.
 Set $\Gamma_{n}$ as the set of all the formulae and their subformulae in $\pi_{n}$.
  Then there exists a positive constant $\epsilon$ such that, for sufficiently large $n$, there exists $\rho \in \mathcal{B}^{n}_{n-n^{\epsilon}}$ such that $\Gamma^{\rho}$ has an $o(n^{\epsilon})$-evaluation with $injPHP$-trees.
\end{cor}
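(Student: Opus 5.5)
The corollary follows from Theorem \ref{k-evaluationexists} by instantiating it with $\Gamma := \Gamma_{n}$. The key step is to check that $\Gamma_{n}$ satisfies the hypotheses: it must be subformula-closed, consist of $\{r_{ij}\}$-formulae within depth $1+\frac{1}{2}$, and have cardinality $< 2^{n^{\epsilon}}$.

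Subformula-closedness holds by definition, since $\Gamma_{n}$ was taken to be all formulae of $\pi_{n}$ together with their subformulae.

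For the size bound, note that $\#\Gamma_{n}$ is at most the total number of subformula occurrences, which is bounded by $|\pi_{n}| \le 2^{|n|^{c_{0}}} = 2^{(\log n)^{O(1)}}$ for some constant $c_{0}$. Let $\epsilon_{0}$ be the constant provided by Theorem \ref{k-evaluationexists}. For sufficiently large $n$ we have $(\log n)^{c_{0}} < n^{\epsilon_{0}}$, and hence $\#\Gamma_{n} < 2^{n^{\epsilon_{0}}}$.

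For the depth condition, I would invoke the subformula-property Remark. Every formula in $\pi_{n}$ is one of three kinds:
\begin{enumerate}
\item a subformula of $injPHP^{n+1}_{n}$; these are $\bigvee\bigwedge$ of literals or $\bigvee$ of small $\land$'s, hence within depth $1+\frac{1}{2}$;
\item an $s\Sigma_{1+\frac{1}{2}}$ or $s\Pi_{1+\frac{1}{2}}$ formula, which is within depth $1+\frac{1}{2}$ by definition;
\item a $UCP$-axiom formula $\bigvee_{e}\theta_{i,j,e}\lor\bigwedge_{e',j'}\lnot\theta_{i,j',e'}$ with each $\theta$ a $p\Sigma_{0+\frac12}$-formula.
\end{enumerate}
The third kind is the one delicate point, because its top connective is a binary $\lor$ joining a $p\Sigma_{1+\frac{1}{2}}$ and a $p\Pi_{1+\frac{1}{2}}$ formula. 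I would handle it in one of two ways. Either I would read the theorem's notion of a ``family within depth $1+\frac{1}{2}$'' as allowing such binary combinations, since an evaluation of both disjuncts extends to their $\lor$ through the usual treatment of $\lor$ in evaluations in \cite{UCPvsinjPHPrevised}. Or I would apply the theorem to the family without these top-level formulae, which is still subformula-closed, and then extend the resulting evaluation by defining the tree of each binary $\lor$ from the trees of its two disjuncts. Such a tree still has height $o(n^{\epsilon})$, since its height is at most the sum of two $o(n^{\epsilon})$ heights.

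With the hypotheses verified, the theorem yields $\rho \in \mathcal{M}^{n+1}_{n}$ which is a partial bijection of size $n-n^{\epsilon_{0}}$, so $\rho \in \mathcal{B}^{n}_{n-n^{\epsilon_{0}}}$, and $\Gamma_{n}^{\rho}$ has an $o(n^{\epsilon_{0}})$-evaluation with $injPHP$-trees. Taking $\epsilon:=\epsilon_{0}$ completes the proof. The only real obstacle is the bookkeeping for the $UCP$-axiom formulae; everything else is direct instantiation.
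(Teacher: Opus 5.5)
Your cardinality bound and your treatment of the $UCP$-axiom formulae match what the paper does. For the latter you apply Theorem \ref{k-evaluationexists} to the subformula-closed part of $\Gamma_{n}$ lying within depth $1+\frac{1}{2}$, then assign $U \lor U'$ to each binary disjunction.

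The gap is in your item 1. The member $\bigvee_{p \in [n+1]}\bigwedge_{h \in [n]} \lnot r_{ph}$ of $injPHP^{n+1}_{n}$ is \emph{not} within depth $1+\frac{1}{2}$. In the paper's definition the ``$\frac{1}{2}$'' is the small $\bigvee$/$\bigwedge$-free bottom layer, so an unbounded $\bigvee$ of unbounded $\bigwedge$'s of literals is $p\Sigma_{2+\frac{1}{2}}$. Only $\bigwedge_{h}\lnot r_{ph}$ and the two $\bigvee$-of-small-$\land$ members are $s\Sigma_{1+\frac{1}{2}}$ or $s\Pi_{1+\frac{1}{2}}$; this is why the subformula-property remark lists subformulae of $injPHP^{n+1}_{n}$ as a separate class.

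So the theorem does not apply to this formula, and your binary-$\lor$ extension does not rescue it. After restriction by $\rho$ it is still an unbounded disjunction over about $n^{\epsilon}$ free pigeons, and iterating $\lor$ over that many trees does not keep the height $o(n^{\epsilon})$ in general. Nor is there a general depth-$2+\frac{1}{2}$ evaluation to fall back on: the remark after the theorem shows that the hole-side analogue $\bigvee_{h}\bigwedge_{p}\lnot r_{ph}$ already defeats the construction.

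The missing idea is a direct argument for this particular formula. For $p \in \dom(\rho)$ the conjunct $\bigwedge_{h}\lnot r_{ph}$ is already falsified by $\rho$. For the remaining $p$, let $S_{p}$ be the tree the theorem assigns to $\bigwedge_{h}\lnot r_{ph}$, which does lie in the depth-$1+\frac{1}{2}$ family.
\begin{enumerate}
\item By the evaluation conditions, every $1$-branch of $S_{p}$ must be incompatible with each matching $\langle p,h\rangle$.
\item Every branch of $S_{p}$ has size $o(n^{\epsilon})$, while $n^{\epsilon}$ holes remain free after $\rho$, so each branch is compatible with $\langle p,h\rangle$ for some free hole $h$.
\item Hence $br_{0}(S_{p})=br(S_{p})$ for every $p$, and you can assign the height-$0$ tree labeled $0$ to $\bigvee_{p}\bigwedge_{h}\lnot r_{ph}$.
\end{enumerate}
This is not bookkeeping. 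It is exactly the fact the main proof later uses to obtain $T\not\models injPHP^{n+1}_{n}$, and without it the corollary is not established for all of $\Gamma_{n}$.
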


For convenience, we introduce the following notation:
\begin{defn}
Given two $\{0,1\}$-labeled trees $S_{1}=(T_{1},L_{1})$ and $S_{2}=(T_{2},L_{2})$ with $height(T_{1})+height(T_{2}) \leq n$ (cf. Definition 37 in \cite{UCPvsinjPHPrevised}), 
\[S_{1} \lor S_{2} := (T_{1}*T_{2}, L),\]
where $L(bb') = L_{1}(b) \lor L_{2}(b')$ for $b \in br(T_{1})$ and $b' \in br(T_{2})$ with $b||b'$. (See also Definition 46 in \cite{UCPvsinjPHPrevised} for the definition of $*$.)

\end{defn}

\begin{rmk}
Note that $\lor$ is associative as long as the sum of all the heights of the trees in concern is still no more than $n$.
Therefore, we omit parentheses for $\lor$ of constantly many $\{0,1\}$-labeled trees of height $o(n)$.
\end{rmk}

\begin{proof}[Proof of Corollary \ref{k-evaluationforproofs}]
Let $\widetilde{\Gamma}_{n}$ be the formulae in $\Gamma_{n}$ within depth $1+\frac{1}{2}$.
Then, by Theorem \ref{k-evaluationexists}, we obtain $\rho \in \mathcal{B}^{n}_{n-n^{\epsilon}}$ such that $\widetilde{\Gamma}^{\rho}$ has an $o(n^{\epsilon})$-evaluation $T_{n}$ with $injPHP$-trees for sufficiently large $n$.
It suffices to extend $T_{n}$ on $\widetilde{\Gamma}_{n}$ to $\Gamma_{n}$.

For readability, we suppress superscripts $\rho$ and replace $n^{\epsilon}$ with $n$ below, as if the restricted formulae were the original.

Due to subformula-property of $\pi_{n}$, $\Gamma_{n}\setminus \widetilde{\Gamma}_{n}$ consists of:
\begin{itemize}
 \item the formula $\bigvee_{p \in [n+1]}\bigwedge_{h \in [n]} \lnot r_{ph}$ in $injPHP^{n+1}_{n}$.
 \item subformulae $\bigvee_{e \in [k]} \theta_{i,j,e} \lor \bigwedge_{e' \in [k]} \lnot \theta_{i,j',e'}$ (each $\theta_{i,j,e}$ is $p\Sigma_{0+\frac{1}{2}}$).
\end{itemize}

For the formula of the first kind, assume $T_{n}$ assigns an $injPHP$-tree $S_{p}$ for each $\bigwedge_{h \in [n]} \lnot r_{ph}$ ($p \in [n+1]$).
Fix $p \in [n+1]$.
By definition of $injPHP$-trees, each $b \in br_{1}(S_{p})$ is incompatible with any size-1 matching of the form $\langle p,h \rangle$ ($h \in [n]$).
On the other hand, since the height of $S_{p}$ is $o(n)$, for each $b \in br(S_{p})$, there exists $h \in [n]$ such that $b || \langle p,h \rangle$.
Therefore, we have $br_{0}(S_{p})=br(S_{p})$.
Since $p$ is arbitrary, it suffices to assign the tree of height $0$ and with the label $0$ to $\bigvee_{p \in [n+1]}\bigwedge_{h \in [n]} \lnot r_{ph}$.

Now, we consider the formulae of the second kind, that is, we describe an appropriate assignment for the formula of the form
\begin{align}
\bigvee_{e \in [k]} \theta_{i,j,e} \lor \bigwedge_{e' \in [k]} \lnot \theta_{i,j',e'} \label{emptyordset}
\end{align}
(each $\theta_{i,j,e}$ is $p\Sigma_{0+\frac{1}{2}}$).
$T_{n}$ is already defined for each disjunct $\bigvee_{e \in [k]} \theta_{i,j,e}$ and $\bigwedge_{e' \in [k]} \lnot \theta_{i,j',e'}$.
Let $U$ and $U'$ be assigned trees respectively.
Then it suffices to assign $U \lor U'$ to the whole formula (\ref{emptyordset}).

\end{proof}

Our proof of Theorem \ref{k-evaluationexists} is analogous to the well-established proof of the existence of $k$-evaluation for Ajtai's theorem in \cite{remake} and \cite{remake2} (for a neat and modern exposition with historical backgrounds, see \cite{proofcomplexity} Chapter 15). 
A main difference here is that we have to control the number of singletons in the partial injections corresponding to the branches in our shallow $injPHP$-trees, which is accomplished by first querying only pigeons to decide formulae of depth $0+\frac{1}{2}$ and then by allowing hole-queries for formulae of depth $1+\frac{1}{2}$.

\begin{proof}[Proof of Theorem \ref{k-evaluationexists}]
As for each $p\Sigma_{0+\frac{1}{2}}$-formula $\theta \in \Gamma$, since all the atoms in $\theta$ are among $\{r_{ij}\}_{i \in [n+1], j \in [n]}$, we can decide the truth value of $\theta$ by partial assignments induced by partial \textbf{bijections} covering all the pigeons appearing (as indices of the variables) in $\theta$.
Since $|{\theta}| \leq (\log(n))^{O(1)}$, it suffices to assign an $injPHP$-trees querying all the pigeons in $\theta$ one by one, labeling each branch by the corresponding truth value of $\theta$.

 Now, we consider an assignment for $p\Sigma_{1+\frac{1}{2}}$-formulae in $\Gamma$.
 We need a partial bijection which works for at most $\#\Gamma$-many $p\Sigma_{1+\frac{1}{2}}$-formulae simultaneously, and we begin with counting partial bijections working for a fixed formula.
 Without loss of generality, we consider a formula $\varphi$ of the form $\bigvee_{i \in [L]} \psi_{i}$, where each $\psi_{i}$ is $p\Sigma_{0+\frac{1}{2}}$.
 Let $\epsilon$ be a positive parameter, which will be optimized later.
 Take $\boldsymbol{\rho} \in \mathcal{B}^{n}_{n-n^{\epsilon}}$ uniformly randomly.
 We consider the canonical \textbf{$injPHP$-tree} (therefore each branch corresponds to partial \textbf{injections} in general) for 
 \[\mathcal{F}^{\boldsymbol{\rho}}:=\bigcup_{i \in [L]} br_{1}((T_{n}(\psi_{i}))^{\boldsymbol{\rho}}),\]
  described in Example 49 in \cite{UCPvsinjPHPrevised}.
 We denote the tree by $S_{\boldsymbol{\rho}}$ below.
 Our aim is to bound the number of $\boldsymbol{\rho}$ for which $height(S_{\boldsymbol{\rho}}) > s:=n^{\epsilon/2}$.
 
 We code $\boldsymbol{\rho}$ by a triple $(\chi, \vec{\beta}, \vec{\gamma})$, where
 \begin{itemize}
  \item $\chi$ is a partial bijection of size $n-n^{\epsilon}+r$ ($(s/2q)-1 \leq r \leq s$, and $q:=log_{2}(|\pi_{n}|)$. Note that $|{\theta}| \leq q$ holds for any $\theta$ occurring in $\pi_{n}$ with depth $0+\frac{1}{2}$).
  \item $\vec{\beta}=(\beta_{1},\ldots,\beta_{l}) \in (\{0,1\}^{q})^{l}$ ($(s/2q)-q \leq l \leq s/q$).
  We may assume $\vec{\beta} \in \{0,1\}^{s}$.
  \item $\vec{\gamma}=(\gamma_{1}, \ldots, \gamma_{l}) \in ([n^{\epsilon}]^{2q})^{l}$.
  We may assume $\vec{\gamma} \in [n^{\epsilon}]^{2s}$.
 \end{itemize}
 
 Each item is concretely defined as follows: take the leftmost path in $S_{\boldsymbol{\rho}}$ among those with length $\geq s$.
 Let $\tau_{1}, \ldots, \tau_{N}$ be the elements of $\mathcal{F}$ which was considered when the path was constructed.
 Let $l+1$ be the first index when the path got longer than $s$. 
 Since each $\tau_{i}$ is of size at most $q$, we have 
 \[lq \leq s \leq 2(l+1)q, \ \mbox{that is,}\ s/(2q)-2q\leq l \leq s/q.\]
 
 Let $\nu_{i}$ ($i \in [l]$) be the resulting path considering $\tau_{i}$.
 Note that $\tau_{i}$ is compatible with $\boldsymbol{\rho}\nu_{1}\cdots\nu_{i-1}$, and $\nu_{i}$ covers all the pigeons and holes in $\tau_{i}^{\boldsymbol{\rho}\nu_{1}\cdots\nu_{i-1}}$.

 \begin{enumerate}
  \item Set $\chi:=\boldsymbol{\rho}\tau_{1}\cdots\tau_{l}$. Note that $\tau_{j+1} || \boldsymbol{\rho}\tau_{1}\cdots\tau_{j}$ immediately follows by induction.
  \item Let $\beta_{i}\in \{0,1\}^{q}$ be the indicator vector of 
  \begin{align}\label{map}
  \tau_{i}^{\boldsymbol{\rho}\nu_{1}\cdots\nu_{i-1}}=\tau_{i}^{\boldsymbol{\rho}\nu_{1}\cdots\nu_{i-1}\tau_{i+1}^{\boldsymbol{\rho}\nu_{1}\cdots\nu_{i}}\cdots\tau_{l}^{\boldsymbol{\rho}\nu_{1}\cdots\nu_{l-1}}}
  \end{align}
   in $\tau_{i}$, indicating which disjunct in $\tau_{i}$ survives after the restriction by $\boldsymbol{\rho}\nu_{1}\cdots\nu_{i-1}$.
  \item\label{gamma} Let $\gamma_{i}\in [n^{\epsilon}]^{2q}$ be a vector indicating which holes (resp. pigeons) are mapped to the pigeons (resp. holes) in the previous partial bijection (\ref{map}) by $\nu_{i}$.
  More precisely, $\gamma_{i}$ is a code of a partial injection from $([n+1]\setminus \dom(\xi))$ to $([n] \setminus \ran(\xi))$ of size at most $2q$, where 
  \[\xi:=\boldsymbol{\rho}\nu_{1}\cdots\nu_{i-1}\tau_{i+1}^{\boldsymbol{\rho}\nu_{1}\cdots\nu_{i}}\cdots\tau_{l}^{\boldsymbol{\rho}\nu_{1}\cdots\nu_{l-1}}.\]
 \end{enumerate}
 
 Then $\boldsymbol{\rho} \mapsto (\chi, \vec{\beta}, \vec{\gamma})$ is injective.
 Indeed, $\boldsymbol{\rho}$ is recovered from $(\chi, \vec{\beta}, \vec{\gamma})$ as follows:
 recall that $\mathcal{F}^{\boldsymbol{\rho}}$ was linearly ordered in the first place in the construction of $S_{\boldsymbol{\rho}}$.
 Towards the recovery of $\boldsymbol{\rho}$, we find $\tau_{1}^{\boldsymbol{\rho}}, \nu_{1}, \tau_{2}^{\boldsymbol{\rho}\nu_{1}}, \ldots, \tau_{l}^{\boldsymbol{\rho}\nu_{1}\cdots\nu_{l-1}}$.
 Then, subtracting $\tau_{1}^{\boldsymbol{\rho}}\cdots\tau_{l}^{\boldsymbol{\rho}\nu_{1}\cdots\nu_{l-1}}$ from $\chi$, we get $\boldsymbol{\rho}$.
 
 Assume $\tau_{1}^{\boldsymbol{\rho}}, \nu_{1}, \ldots, \tau_{i-1}^{\boldsymbol{\rho}\nu_{1}\cdots\nu_{i-2}}, \nu_{i-1}$ are already recovered ($i \in [l]$).
 Then, compared with $\chi$, we obtain 
 \[\sigma:=\boldsymbol{\rho}\nu_{1}\cdots\nu_{i-1}\tau_{i}^{\boldsymbol{\rho}\nu_{1}\cdots\nu_{i-1}}\cdots\tau_{l}^{\boldsymbol{\rho}\nu_{1}\cdots\nu_{l-1}}.\]
 Thus we can find $\tau_{i}$ as the first element in $\mathcal{F}^{\boldsymbol{\rho}}$ compatible with $\sigma$.
 Now, with the data of $\beta_{i}$, we obtain $\tau_{i}^{\boldsymbol{\rho}\nu_{1}\cdots\nu_{i-1}}$, and hence 
 \[\sigma \setminus \tau_{i}^{\boldsymbol{\rho}\nu_{1}\cdots\nu_{i-1}} = \boldsymbol{\rho}\nu_{1}\cdots\nu_{i-1}\tau_{i+1}^{\boldsymbol{\rho}\nu_{1}\cdots\nu_{i}}\cdots\tau_{l}^{\boldsymbol{\rho}\nu_{1}\cdots\nu_{l-1}},\]
 which is $\xi$ in the item (\ref{gamma}) above.
 Therefore, with the data of $\gamma_{i}$, we can recover $\nu_{i}$.

 Lastly, we count the number of the possible triples $(\chi, \vec{\beta}, \vec{\gamma})$ as above.
 We consider the ratio of the number to the number of possible $\boldsymbol{\rho}$:
 \begin{align*}
 &\frac{\# \left(\bigcup_{(s/2q)-1 \leq r \leq s} \mathcal{B}^{n}_{n-n^{\epsilon}+r} \right)\times \# \{0,1\}^{s} \times \#[n^{\epsilon}]^{2s}}{\#\mathcal{B}^{n}_{n-n^{\epsilon}}} \\
 \leq  &2^{s}n^{2\epsilon s}\sum_{(s/2q)-1 \leq r \leq s}\frac{ \# \mathcal{B}^{n}_{n-n^{\epsilon}+r}}{\#\mathcal{B}^{n}_{n-n^{\epsilon}}} \\
 = &2^{s}n^{2\epsilon s}\sum_{(s/2q)-1 \leq r \leq s} \frac{\binom{n^{\epsilon}}{r}(n^{\epsilon}+1)^{(r)}}{\binom{n-n^{\epsilon}+r}{r}}\\
 = &2^{s}n^{2\epsilon s}\sum_{(s/2q)-1 \leq r \leq s} \frac{(n^{\epsilon})^{(r)}(n^{\epsilon}+1)^{(r)}}{(n-n^{\epsilon}+r)^{(r)}}\\
 \leq &2^{s}n^{2\epsilon s}\sum_{(s/2q)-1 \leq r \leq s} \frac{n^{\epsilon r} (n^{\epsilon}+1)^{r}}{(n-n^{\epsilon})^{r}}\\
 = &2^{s}n^{2\epsilon s}\sum_{(s/2q)-1 \leq r \leq s} \left(\frac{n^{\epsilon} (n^{\epsilon}+1)}{n-n^{\epsilon}}\right)^{r}\\
 = & 2^{s}n^{2\epsilon s} \frac{\left(\frac{n^{\epsilon} (n^{\epsilon}+1)}{n-n^{\epsilon}}\right)^{(s/2q)-1}}{1-\frac{n^{\epsilon} (n^{\epsilon}+1)}{n-n^{\epsilon}}}\\
 =& O\left(2^{s}n^{2\epsilon s}\left(\frac{n^{\epsilon} (n^{\epsilon}+1)}{n-n^{\epsilon}}\right)^{(s/2q)-1}\right)
 \end{align*}
  
 Now, recalling $s=n^{\epsilon/2}$ and $q=\log_{2}(n)^{O(1)}$, the rightmost side is further bounded by 
 \[O(n^{(5\epsilon-1)(s/2q-1)}) \leq 2^{-\Omega(n^{\epsilon/3})}\]
 if $\epsilon > 0$ is set sufficiently small.
 
 $\varphi$ was arbitrary. Since there are at most $2^{|n|^{O(1)}}$-many of them, and 
 \[2^{|n|^{O(1)}} \times 2^{-\Omega(n^{\epsilon/3})} = o(1),\]
  we obtain the desired $\boldsymbol{\rho}$ by union bound.
 
\end{proof}

\begin{rmk}
Note that the construction of a restriction and an $o(n^{\epsilon})$-evaluation above cannot be extended to depth $2+\frac{1}{2}$-case straightforwardly.
It is totally nontrivial to bound the number of triples $(\chi,\vec{\beta},\vec{\gamma})$ in this case since $\chi$ can be a partial injection in general: each $\tau_{i}$ can include singletons. 
Although we can show that at least a half of $\tau_{i}$ are 2-sets (of pigeons and holes), we cannot exclude the possibility that $\tau_{i}^{\nu_{1}\cdots\nu_{i-1}}$ consists, say, only of singletons. 

Indeed, if we consider the formula $\bigvee_{h \in [n]}\bigwedge_{p \in [n+1]} \lnot r_{ph}$, we face the canonical $injPHP$-tree for $\{\{\langle h \rangle\} \mid h \in [n]\}$.
After any restriction with a partial bijection in $\mathcal{B}^{n}_{n-n^{\epsilon}}$, there still remains $n^{\epsilon}$-many singletons, and it is straightforward to show that the canonical $injPHP$-tree will not be shallow (the height grows to $n^{\epsilon}$) in this case.
\end{rmk}

\section{The Main Result}\label{The Main Result}
In this section, we show:
\begin{thm}
The cedents $injPHP^{n+1}_{n}$ ($n \geq 1$) do not have $2^{|n|^{O(1)}}$-sized $LK^{*}_{1+\frac{1}{2},O(1)}(UCP)$-derivations.
\end{thm}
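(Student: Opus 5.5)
Suppose, towards a contradiction, that $\pi_{n}$ are $2^{|n|^{O(1)}}$-sized $LK^{*}_{1+\frac{1}{2},c}(UCP)$-derivations of $injPHP^{n+1}_{n}$. By Corollary \ref{k-evaluationforproofs} we obtain, for sufficiently large $n$, some $\rho \in \mathcal{B}^{n}_{n-n^{\epsilon}}$ such that $\Gamma_{n}^{\rho}$ has an $o(n^{\epsilon})$-evaluation $T_{n}$ with $injPHP$-trees. Restricting $\pi_{n}$ by $\rho$ yields a derivation $\pi_{n}^{\rho}$ of $(injPHP^{n+1}_{n})^{\rho}$, which is (up to renaming) $injPHP^{m+1}_{m}$ with $m=n^{\epsilon}$. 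The rules are preserved under restriction; formulae may collapse to constants, but this only makes the rules trivially sound. We then follow the proof of Theorem 53 in \cite{UCPvsinjPHPrevised}. We show by induction on the height of $\pi_{n}^{\rho}$, from the leaves to the root, that every cedent $\Delta$ occurring in it is \emph{true} under $T_{n}$, meaning that the tree $\bigvee_{\varphi \in \Delta} T_{n}(\varphi)$ has all branches labeled $1$. This tree has height $o(m)$ because cedents have at most $c$ formulae. The root cedent gets the all-$0$ tree. The first disjunct was handled in the proof of Corollary \ref{k-evaluationforproofs}, and the two collision disjuncts are falsified on every branch because branches are partial injections. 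This gives the contradiction.

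The soundness of the ordinary rules will be checked exactly as in \cite{UCPvsinjPHPrevised} and the standard $k$-evaluation argument (\cite{proofcomplexity} Chapter 15). Initial cedents and trivial cut are immediate. For the $\bigvee$, $\lor$, $\land$ and $\bigwedge$ rules we use the defining properties of an evaluation: the tree of $\bigvee_{i}\varphi_{i}$ represents the disjunction of the trees of the $\varphi_{i}$, in the sense that its $1$-branches are exactly those compatible with some $1$-branch of some $T(\varphi_{i})$. We also use that refining trees by $*$ preserves labels. For $p\Sigma_{1+\frac{1}{2}}$-induction, we use that $T(\overline{\varphi})$ is $T(\varphi)$ with flipped labels. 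Assume a branch $b$ of the combined tree for $\Gamma$ is labeled $0$. Extend $b$ by the trees of $\varphi_{1},\ldots,\varphi_{I}$ one at a time. The first premise forces $\varphi_{1}$ to be $1$, and each middle premise propagates this to $\varphi_{I}$. The last premise then forces $\varphi_{I}$ to be $0$, a contradiction. This chain does not grow the height, since at each step we only refine along a single branch of height $o(m)$. It is essential here that $I$ may be large but each step refines by only one tree of height $o(m)$.

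The main obstacle is the $UCP$-axiom. Suppose a branch $b$ falsifies $\Gamma$. Each of the four families of premises gives a tree in which every branch extending $b$ makes the corresponding formula true. Since $k\not\equiv 0 \pmod d$, we take the $\theta_{i,j,e}$, which are $p\Sigma_{0+\frac{1}{2}}$ and so are decided by pigeon-querying trees of polylog height. From them we want to produce a partial injection of size $o(m)$ under which they define a genuine partition of $[k]$ into $d$-sets grouped by $i$. This is impossible because $d\nmid k$, and would give the contradiction. The difficulty is that $k$ and $l$ may be huge, so we cannot query all $\theta_{i,j,e}$ at once. I would follow the strategy of Theorem 53 in \cite{UCPvsinjPHPrevised}. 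Consider the structure on branches extending $b$ of the combined trees, and count modulo $d$. Using that the trees for $\bigvee_{e}\theta_{i,j,e}$ and $\bigvee_{(i,j)}\theta_{i,j,e}$ are shallow, a counting argument over the $1$-branches should show that the set of true $\theta_{i,j,e}$ is determined by few queries. Note that under the restriction these $\theta$ depend only on pigeons. Summing the block sizes over the disjoint decomposition then shows that $k$ is divisible by $d$.

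I expect this combinatorial step, transferring the counting contradiction to shallow decision trees, to be the hardest part. As a fallback I would reduce it to the already established result of \cite{UCPvsinjPHPrevised} for $Count$-type axioms evaluated by shallow trees.
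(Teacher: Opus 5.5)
\textbf{There is a genuine gap at the $UCP$-axiom step.}

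Your overall frame matches the paper: restrict by $\rho$, show every rule other than the $UCP$-axiom is sound for $T$, and note that the root cedent is falsified. Your chaining argument for $p\Sigma_{1+\frac{1}{2}}$-induction is also fine. The gap is the $UCP$-axiom, which is where the theorem is actually decided, and the mechanism you suggest there cannot work.

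A single partial injection of size $o(m)$ on which the $\theta_{i,j,e}$ form a genuine partition of $[k]$ into $d$-blocks could only come from combining the trees for all $e\in[k]$ and all blocks they hit. That is affordable only when $k$ is small compared to $m$. But $k$ is bounded only by $|\pi_{n}|=2^{|n|^{O(1)}}\gg m=n^{\epsilon}$, and nothing forces the set of true $\theta_{i,j,e}$ to be determined by few queries. So the contradiction has to come from a global, algebraic principle.

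What the paper does, after restricting by the falsifying branch of $\Gamma$:
Let $N_{i}$, $S_{i,j}$, $S_{e}$ be the trees assigned to $\bigwedge_{e',j'}\lnot\theta_{i,j',e'}$, $\bigvee_{e}\theta_{i,j,e}$ and $\bigvee_{(i,j)}\theta_{i,j,e}$, and set $U_{i,j}:=N_{i}*\sum_{b\in br_{0}(N_{i})}S_{i,j}^{b}$.
Premises (a) and (b) give every branch of $U_{i,j}$ through $br_{0}(N_{i})$ a unique $e_{b}$. Let $B$ be the set of these branches.
Premises (c) and (d) give every branch of $S_{e}$ a unique $(i_{b},j_{b})$.
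Define $Y_{e}:=S_{e}*\sum_{b\in br(S_{e})}U_{i_{b},j_{b}}^{b}$, labeled $\langle i_{b},j_{b},e\rangle$.
Define $X_{i,j}:=U_{i,j}*\sum_{b\in B}S_{e_{b}}^{b}$, labeled $\langle i,j,e_{b}\rangle$ on $B$ and $\perp$ elsewhere.
These trees satisfy $br_{\perp}(X_{i,1})=\cdots=br_{\perp}(X_{i,d})=br_{1}(N_{i})$ and $br_{\langle i,j,e\rangle}(X_{i,j})=br_{\langle i,j,e\rangle}(Y_{e})$.
Querying the same $N_{i}$ first for every $j$ is how the ``block $i$ is empty or full'' clause of $UCP$ enters. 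It is what makes the $\perp$-contributions a multiple of $d$.

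The branch terms $x_{b}$ of an $injPHP$-tree of height $o(m)$ sum to $1$ modulo the $injPHP^{m+1}_{m}$ ideal, by a low-degree derivation. Hence these trees give, modulo that ideal over $\ZZ_{d}$,
\[
k=\sum_{e\in[k]}\sum_{b\in br(Y_{e})}x_{b}=\sum_{(i,j)\in[l]\times[d]}\Bigl(1-\sum_{b\in br_{\perp}(X_{i,j})}x_{b}\Bigr)=ld-d\sum_{i\in[l]}\sum_{b\in br_{1}(N_{i})}x_{b}\equiv 0 .
\]
Since $d\nmid k$, the constant $k$ is nonzero in $\ZZ_{d}$, so this is a Nullstellensatz refutation of $injPHP^{m+1}_{m}$ of degree $o(m)$ over $\ZZ_{d}$. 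That contradicts Razborov's linear degree lower bound, in the form of Corollary 25 of \cite{UCPvsinjPHPrevised}. This is the construction from the proof of Theorem 53 there, which the paper repeats.

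Your ``summing the block sizes'' gestures at this identity, but reads it as arithmetic under a single assignment. The degree lower bound, which is the ingredient doing the real work, is absent.

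Your fallback also runs the wrong way. $UCP$ yields the $Count$ principles with short proofs, not conversely, so a lower bound against $Count$-type axioms does not transfer to the $UCP$-axiom.
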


The proof is analogous to the proof of Theorem 53 in \cite{UCPvsinjPHPrevised}.
The main difference is that we have $o(n)$-evaluation for only formulae within depth $1+\frac{1}{2}$ this time while the instances of $UCP^{l,d}_{k}$ in our concern have depth $2+\frac{1}{2}$.
We deal with it by expanding the outermost connective in the formulation of $LK_{1+\frac{1}{2}}^{*}(UCP)$ (see Definition \ref{cedent calculus}).

\begin{proof}
Suppose $injPHP^{n+1}_{n}$ ($n \geq 1$) had $2^{|n|^{O(1)}}$-sized $LK^{*}_{1+\frac{1}{2},O(1)}(UCP)$-derivations $\pi_{n}$.
By Corollary \ref{k-evaluationforproofs}, for an appropriate $\epsilon > 0$ and for sufficiently large $n$, there exists $\rho_{n} \in \mathcal{B}^{n}_{n-n^{\epsilon}}$ and an $o(n^{\epsilon})$-evaluation $T_{n}$ for the subformula-closure of the set of $\varphi^{\rho}$, where $\varphi$ ranges over all the formulae occurring in $\pi_{n}$.
Below, for readability, we suppress superscripts $\rho$ of $\varphi^{\rho}$ and replace $n^{\epsilon}$ with $n$, as if the restricted formulae were the original. 

Fix a sufficiently large $n$ as above.
We denote $T_{n}$ by $T$, and $T_{\varphi}$ denotes the $\{0,1\}$-labeled $injPHP$-tree assigned to the formula $\varphi$.
For each sequent $\Gamma$ in $\pi$, we write $T \models \Gamma$ if 
\[U:=T_{\varphi_{1}}\lor \cdots \lor T_{\varphi_{l}} \quad \mbox{ (where $\Gamma = \{\varphi_{1},\ldots, \varphi_{l}\}$)}\]
 satisfies $br_{1}(U)=br(U)$.
Note that $l=O(1)$ and $height(T_{i})=o(n)$ ($i \in [l]$), and therefore $U$ is well-defined as a labeled $injPHP$-tree.

It is straightforward to see that the semantics $T\models \varphi$ is sound for all the derivation rules except for $UCP$-axiom (cf. Claim 54 in \cite{UCPvsinjPHPrevised}).
On the other hand, $T \not\models injPHP^{n+1}_{n}$ (cf. Claim 55 in \cite{UCPvsinjPHPrevised}).

Therefore, there exists an instance of $UCP$-axiom as Definition \ref{cedent calculus} (\ref{UCP-axiom}) in $\pi_{n}$ such that:
 \begin{enumerate}
\item $T \models \Gamma \cup\{ \bigvee_{e \in [k]} \theta_{i,j,e} \lor \bigwedge_{e' \in [k], j' \in [d]} \lnot \theta_{i,j',e'} \}$ \quad ($(i,j) \in [l] \times [d]$).
\item $T \models \Gamma \cup\{ \lnot \theta_{i,j,e} \lor \lnot \theta_{i,j,e^{\prime}}\}$ \quad ($(i,j) \in [l] \times [d], e \neq e^{\prime} \in [k]$).
%\item $\Gamma,\forall i \in [l]. \forall j\in [d].\forall j' \in [d]. (\exists e \in [n]. R(i,j,e)  \lor \forall e' \in [n]. \lnot R( i,j',e' ))$
%\item $\Gamma, \forall (i,j) \in [l]\times[d]. \forall e \neq e^{\prime} \in [n] (\lnot R( i,j,e) \lor \lnot R( i,j,e^{\prime}))$
\item $T \models \Gamma\cup \{\lnot \theta_{i,j,e} \lor \lnot \theta_{i^{\prime},j^{\prime},e}\}$ \quad ($(i,j) \neq (i^{\prime},j^{\prime}) \in [l] \times [d]$, $e \in [k]$).
%\item $\Gamma, \forall (i,j) \neq (i^{\prime},j^{\prime}) \in [l] \times [d]. \forall e \in [n]. (\lnot R(i,j,e ) \lor \lnot R( i^{\prime},j^{\prime},e ))$
\item $T \models \Gamma\cup\{ \bigvee_{(i,j) \in [l] \times [d]} \theta_{i,j,e}\}$ \quad ($e \in [k]$).
%\item{$\Gamma, \forall e \in [n]. \exists (i,j) \in [l] \times [d]. R( i,j,e )$}
\item $T \not \models \Gamma$.
\end{enumerate}

As for the last item, with further restriction by a partial injection of size $\log(n)^{O(1)}$, we may assume that $br_{0}(T_{\varphi})=br(T_{\varphi})$ for every $\varphi \in \Gamma$.
Therefore, we have
 \begin{enumerate}
\item\label{emptyord-set} $T \models  \bigvee_{e \in [k]} \theta_{i,j,e} \lor \bigwedge_{e' \in [k], j' \in [d]} \lnot \theta_{i,j',e'} $ \quad ($(i,j) \in [l] \times [d]$).
\item\label{incompatible1} $T \models  \lnot \theta_{i,j,e} \lor \lnot \theta_{i,j,e^{\prime}}$ \quad ($(i,j) \in [l] \times [d], e \neq e^{\prime} \in [k]$).
%\item $\Gamma,\forall i \in [l]. \forall j\in [d].\forall j' \in [d]. (\exists e \in [n]. R(i,j,e)  \lor \forall e' \in [n]. \lnot R( i,j',e' ))$
%\item $\Gamma, \forall (i,j) \in [l]\times[d]. \forall e \neq e^{\prime} \in [n] (\lnot R( i,j,e) \lor \lnot R( i,j,e^{\prime}))$
\item\label{incompatible2} $T \models \lnot \theta_{i,j,e} \lor \lnot \theta_{i^{\prime},j^{\prime},e}$ \quad ($(i,j) \neq (i^{\prime},j^{\prime}) \in [l] \times [d]$, $e \in [k]$).
%\item $\Gamma, \forall (i,j) \neq (i^{\prime},j^{\prime}) \in [l] \times [d]. \forall e \in [n]. (\lnot R(i,j,e ) \lor \lnot R( i^{\prime},j^{\prime},e ))$
\item\label{totality} $T \models  \bigvee_{(i,j) \in [l] \times [d]} \theta_{i,j,e}$ \quad ($e \in [k]$).

\end{enumerate}

We observe the following:
\begin{enumerate}
\renewcommand{\labelenumi}{\roman{enumi}}
 \item\label{ijtot} By (\ref{totality}), for each $e \in [k]$, every $b \in br(S_{e})$ has $(i,j) \in [l] \times [d]$ and $b' \in br_{1}(S_{i,j,e})$ such that $b' \subseteq b$.
 \item\label{einj} By (\ref{incompatible1}) above, for each $e \in [k]$ and $(i,j) \neq (i^{\prime}, j^{\prime}) \in [l] \times [d]$, every pair of branches $b \in br_{1}(S_{i,j,e})$ and $b^{\prime} \in br_{1}(S_{i^{\prime},j^{\prime},e})$ satisfies $b \perp b^{\prime}$.
 \item\label{ijinj} By (\ref{incompatible2}) above, for each $e \neq e^{\prime} \in [k]$ and $(i,j) \in [l] \times [d]$, each $b \in br_{1}(S_{i,j,e})$ and $b^{\prime} \in br_{1}(S_{i,j,e^{\prime}})$ satisfies $b \perp b^{\prime}$.
 %\item\label{iquasitot} For each $i \in [l]$, $T \models (\lnot\bigvee_{j \in [d]} \lnot \bigvee_{e \in [k]}\psi_{i,j,e}) \lor (\lnot \bigvee_{j \in [d]} \bigvee_{e \in [k]}\psi_{i,j,e})$, that is, every $b \in br(U_{i})$ is an extension of some $b^{\prime} \in br_{1}(P_{i}^{c})$ or some $b^{\prime} \in br_{1}(N_{i}^{c})$. In the former case, $b$ is incompatible with every $b^{\prime\prime}\in \bigcup_{j}br_{0}(S_{i,j})$. In the latter case, $b$ is incompatible with every $b^{\prime\prime} \in \bigcup_{j,e}br_{1}(S_{i,j,e})$. Therefore, the two cases are mutually disjoint. Indeed, if $b$ satisfies the both cases, then take $b^{\prime} \in br(S_{i,j})$ such that $b || b^{\prime}$ (which exists since $\#b$ and $height(S_{i,j})$ are both $o(n)$). It follows that $b^{\prime} \in br_{1}(S_{i,j})$, and therefore $b^{\prime}$ is an extension of some $b^{\prime \prime } \in \bigcup_{e}br_{1}(S_{i,j,e})$, which contradicts the observation of the latter case. 
\end{enumerate}

With the previous observations, we construct labeled $injPHP$-trees $(X_{i,j})_{(i,j) \in [l] \times [d]}$ and $(Y_{e})_{e \in [k]}$ as follows：
\begin{itemize}
 \item Let 
 \begin{align*}
 S_{e} &:=T_{\bigvee_{(i,j) \in [l] \times [d]} \theta_{i,j,e}} \quad (e \in [k]), \\
 U_{i,j} &:=N_{i} * \sum_{b \in br_{0}(N_{i})}S_{i,j}^{b} \quad ((i,j) \in [l] \times [d]),
 \end{align*}
 where $N_{i}:=T_{\bigwedge_{e' \in [k], j' \in [d]} \lnot \theta_{i,j',e'}}$, $S_{i,j}:= T_{\bigvee_{e \in [k]} \theta_{i,j,e}}$.
  By (\ref{emptyord-set}), if $b \in br_{0}(N_{i})$, then for any $(b')^{b} \in br_{1}(S_{i,j}^{b})$, $b' \in br_{1}(S_{i,j})$, that is, together with the observation (ii), there uniquely exists $e_{b} \in [k]$ such that $b$ extends some $b'' \in br_{1}(T_{\theta_{i,j,e_{b}}})$.
  
 \item We define $Y_{e}$ for fixed $e$ first.  
 By observations (i)(iii), each $b \in br(S_{e})$ has a unique $(i_{b},j_{b}) \in [l] \times [d]$ such that $b$ is an extension of some $b^{\prime} \in br_{1}(S_{i_{b},j_{b},e})$. 
 Consider the tree 
 \begin{align*}
 S_{e}*\sum_{b \in br(S_{e})}(U_{i_{b},j_{b}})^{b}.
 \end{align*}
 (here, we have concatenated the trees, ignoring their labels).
 
 Label each branch extending $b \in br(S_{e})$ with $\langle i_{b}, j_{b},e\rangle$. 
 Let $Y_{e}$ be the resulting labeled $injPHP$-tree. 
 Note that $height(Y_{e})$ is still $o(n)$.
 
 \item Next, we define $X_{i,j}$ for fixed $(i,j) \in [l]\times[d]$. 
 Let $B \subseteq br(U_{i,j})$ be the set of all $\tau \tau' \in br(U_{i,j})$ such that $\tau \in br_{0}(N_{i}), \tau' \in br_{1}(S_{i,j})$. 
 Let 
 \[X_{i,j}:= U_{i,j}*\sum_{b \in B} S_{e_{b}}^{b}.\]
 
 Label each branch of $X_{i}$ extending $b \in B$ by $\langle i, j, e_{b} \rangle$ and the remaining branched by $\perp$.
 \end{itemize}
 
 We see that $(X_{i,j})_{i \in [l],j \in [d]}$ and $(Y_{e})_{e \in [k]}$ satisfy the following:
\begin{itemize}
 \item For each $i$，$br_{\perp}(X_{i,1})= \cdots = br_{\perp}(X_{i,d}) = br_{1}(N_{i})$.
  \item For each $i,j,e$, $br_{\langle i,j,e \rangle}(X_{i,j})= br_{\langle i,j,e \rangle}(Y_{e})$ (as sets of partial injections). 
\end{itemize}

Now, we are ready to repeat the construction of a low-degree NS-refutation of $injPHP^{n+1}_{n}$ over the ring $\ZZ_{d}$ given in the proof of Theorem 53 in \cite{UCPvsinjPHPrevised}, which contradicts Razborov's linear degree lower bound for $injPHP^{n+1}_{n}$ (\cite{Razborov}). (See also \cite{UCPvsinjPHPrevised} Corollary 25.)

\end{proof}

\section{A model theoretic approach} \label{A model theoretic approach}
In this section, we give another proof of our main result using a variation of Riis forcing, presented in a form of model theoretic forcing in \cite{partiallydefinableforcing}.
More concretely, we prove that a variation of the construction presented in \S4.2 of \cite{partiallydefinableforcing} gives a model of $T^{1}_{2}(R)+UCP(\Delta^{b}_{1}(R))+\lnot injPHP^{n+1}_{n}(R)$. 
The challenging part here is to prove that our model satisfies $UCP(\Delta^{b}_{1}(R))$.
We reduce this problem to \cite{Razborov} by a variation of the notion of \textit{PHP-arrays} in \cite{Mykyta'smodels} and \cite{UCPvsinjPHPrevised}.
We assume the reader is familiar with \S 1 - \S 3, \S4.1, and \S4.2 of \cite{partiallydefinableforcing}. 

We work in the setting of \S4.2, that is, we fix:
\begin{itemize}
 \item A countable language $L$ containing $\{+,\cdot, 0, 1, <, lh(x), (x)_{y}\}$, where $lh(x)$ and $(x)_{y}$ are for sequence conding such that in the standard model every finite sequence is of the form $((n)_{0},...,(n)_{lh(n)-1})$.
 \item A countable $L$-structure $M$ that is a proper elementary extension of an $L$-expansion of the standard model $(\NN,+,\cdot, 0,1,<,lh(\cdot),(\cdot)_{\cdot})$.
 \item $L^{*}:=L\cup \{R\}$ for a new binary relation symbol $R \not \in L$. 
 \item Non-standard numbers $b_{0}, n$ such that $b_{0} <^{M} n^{o(1)}$ and $|{n}|<^{M} b^{o(1)}_{0}$, that is, $b_{0}^{k} <^{M} n$ and $|{n}|^{k} <^{M} b_{0}$ for any standard $k \in \NN$.
 \end{itemize}
 
 Intuitively, $b_{0}$ is a ``small'' number: 
 
 \begin{defn}[Definition 4.5 of \cite{partiallydefinableforcing}]
 A definable set $X$ over $M$ is \deff{small} if it is empty or there are $l \in \NN$ and an $L(M)$-definable surjection from $[b_{0}]^{l}$ onto $X$.
 \end{defn}
 
 In \S4.2 of \cite{partiallydefinableforcing}, the set of (codes of) small bijections from subsets of $M$ onto subsets of $[n]$ was considered as the poset of a forcing frame. 
 Here, we consider the set $\mathscr{P}$ of the codes of small \textit{partial injections} from subsets of $[n+1]$ into $[n]$; for each code $p \in \mathscr{P}$ of a small injection, we write:
 \begin{itemize}
  \item $p_{bij}$ for the bijection part, that is, the collection of all the two-sets of the partial injection represented by $p$.
  \item $p_{sing}$ for the empty-hole part, that is, the collection of all the singletons of the partial injection represented by $p$.
   \item $\dom(p)$ denotes the domain of $p$, that is, the domain of the bijection $p_{bij}$.
   Note that $\dom(p) \subseteq [n+1]$.
   \item $\ran(p)$ denotes the range of $p$, that is, the union of the range of $p_{bij}$ and all the singletons $p_{sing}$.
   Note that $\ran(p) \subseteq [n]$.
  \end{itemize} 
  
  We give a forcing frame (for the definition, see \S2.2 of \cite{partiallydefinableforcing}) as follows:
  \begin{defn}
 We set a countable forcing frame $\mathcal{F}:=(P,\leq,D_{0},D_{1},\ldots)$ as follows:
  \begin{enumerate}
  \item $P:=\mathscr{P}$.
  \item For $p,q \in \mathscr{P}$, we set $p \leq q$ iff $p \supseteq q$ as partial injections.
  \item $D_{0},D_{1},\ldots$ is an arbitrary enumeration of the sets $\{p \mid a \in \dom(p)\}$, $\{p \mid c \in \ran(p)\}$ for $a \in [n+1]$ and $c \in [n]$.
  \end{enumerate}
  \end{defn} 
  
  Furthermore, we consider the following forcing relation:
  \begin{defn}
  For $p \in \mathscr{P}$ and an $L(M)$-formula $\varphi$, we define the forcing relation $p \Vdash \varphi$ as a conservative (cf. \cite{partiallydefinableforcing} \S 2.6) universal pre-forcing (\cite{partiallydefinableforcing} Definition 2.3) induced by:
  \[p \Vdash R(s,t) :\Longleftrightarrow (s^{M},t^{M}) \in p_{bij}.\]
  \end{defn}
  
  \begin{rmk}
  As noted for $\Vdash_{Ri}$ in \cite{partiallydefinableforcing} \S 4.2, $\Vdash$ here is also a \textit{forcing} in the sense of \cite{partiallydefinableforcing} Definition 2.5.
  Indeed, since $\Vdash$ is conservative, it trivially satisfies Extension and Stability for $L(M)$-atoms.
  As for the remaining $L^{*}(M)$-atoms, that is, the atoms of the form $R(s,t)$, 
  \begin{itemize}
   \item If $p \Vdash R(s,t)$ and $q \leq p$, then $(s^{M},t^{M}) \in p_{bij} \subseteq q_{bij}$, and therefore $q \Vdash R(s,t)$. This establishes Extension.
   \item Suppose $[R(s,t)]$ is dense below $p$, that is, for any $p' \leq p$, there exists $q \leq p'$ such that $q \Vdash R(s,t)$.
   We show $p \Vdash R(s,t)$ holds. 
   First, we have $q \leq p$ such that $q \Vdash R(s,t)$.
   If $(s^{M},t^{M}) \not \in p_{bij}$, then $s^{M} \not \in \dom(p)$; otherwise $q$ cannot be an extension of $p$.
   Furthermore, since $p$ represents a small partial injection, and $b_{0}<n^{o(1)}$, $[n]\setminus \ran(p)$ is infinite, and we can take $h \in [n]\setminus (\ran(p)\cup\{t^{M}\})$.
   Then we obtain another extension $r \leq p$ such that $(s^{M},h) \in r_{bij}$.
   By assumption of density, there exists $q' \leq r$ such that $q' \Vdash R(s,t)$, which is absurd since $q$ maps $s^{M}$ to $h \neq t^{M}$.
   This establishes Stability. 
  \end{itemize}
  \end{rmk}
  
  By the same reason as Lemma 4.6 of \cite{partiallydefinableforcing}, $\Vdash$ is definable (cf. Definition 3.3 of \cite{partiallydefinableforcing}) for a kind of ``sharply-bounded formulae'':
  
\begin{defn}
 For $b_{0} \in M$, $\Delta^{b_{0}}_{0}(R)$ denotes the closure of the set of quantifier-free $L^{*}(M)$-formulas by $b_{0}$-bounded quantification, i.e. $\exists x < b_{0}$ and $\forall x < b_{0}$.
 If we additionally allow unrestricted existential quantification, we get the set $\Sigma^{b_{0}}_{1}(R)$.
\end{defn}

\begin{lemma}
$\Vdash$ is definable for all the $\Delta^{b_{0}}(R)$-formulae.
\end{lemma}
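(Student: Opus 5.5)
We argue by induction on the construction of $\Delta^{b_{0}}_{0}(R)$-formulas, following the proof of Lemma 4.6 of \cite{partiallydefinableforcing} verbatim except for the atomic case, which now involves partial injections with singletons. Recall that definability of $\Vdash$ for a formula $\varphi(\vec{x})$ means there is an $L$-formula $\delta_{\varphi}(p,\vec{x})$ such that for all $p \in \mathscr{P}$ and all $\vec{a} \in M$, $p \Vdash \varphi(\vec{a})$ iff $M \models \delta_{\varphi}(p,\vec{a})$. Since $\mathscr{P}$ consists of codes, membership in $\mathscr{P}$, the order $\leq$, and the maps $p \mapsto p_{bij}$, $p\mapsto p_{sing}$, $\dom$, $\ran$ are all $L$-definable using the sequence coding $lh$, $(\cdot)_{\cdot}$. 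Smallness is also expressible: the bound $b_{0}$ is a parameter, and a code of size polynomial in $b_{0}$ can be handled because the codes themselves witness the surjection.

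\textbf{Atomic and propositional cases.} For $L(M)$-atoms, conservativity gives $p \Vdash \varphi$ iff $M \models \varphi$, which is trivially definable. For $R(s,t)$, $\delta$ is ``$(s,t) \in p_{bij}$'', which is definable. For negation, universal pre-forcing gives $p \Vdash \lnot\varphi$ iff $\forall q \leq p.\ \lnot (q \Vdash \varphi)$. The quantifier over $q$ ranges over codes in $M$, so it is first order, and we set $\delta_{\lnot\varphi}(p):=\forall q\,(q\in\mathscr{P}\land q\leq p \to \lnot\delta_{\varphi}(q))$. Conjunction is handled similarly: $\delta_{\varphi\land\psi}:=\delta_{\varphi}\land\delta_{\psi}$. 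Disjunction and implication are either treated via the clause for universal pre-forcing or reduced to $\lnot$ and $\land$.

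\textbf{Bounded quantifiers.} For $\forall x<b_{0}.\ \varphi$, universal pre-forcing yields $p\Vdash \forall x<b_{0}\,\varphi$ iff for all $a<b_{0}$ in $M$, $p\Vdash\varphi(a)$. By the induction hypothesis this is $\forall a<b_{0}.\ \delta_{\varphi}(p,a)$. The existential case is reduced through the negation clause, as $\lnot\forall\lnot$, or, if it is primitive, through density: for all $q\le p$ there exist $r\le q$ and $a<b_{0}$ with $\delta_{\varphi}(r,a)$.

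\textbf{Expected obstacle.} The main point to check is that the clauses quantify only over elements of $M$ (codes of small injections) and never over arbitrary subsets of $M$ or generic filters; this is exactly why the forcing is ``partially definable''. I would verify that each clause of the pre-forcing definition in \cite{partiallydefinableforcing} Definition 2.3 is stated in terms of conditions and elements only. I would also check that the change from bijections to partial injections does not affect definability: this amounts to the observation that $p_{bij}$ and $p_{sing}$ remain $L$-definable functions of the code $p$, and that $\mathscr{P}$, being the set of codes satisfying ``small partial injection from $[n+1]$ into $[n]$'', is an $L(M)$-definable set.
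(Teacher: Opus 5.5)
Your atomic, conjunction and $\forall x<b_0$ clauses are fine. The induction breaks at the negation clause, and that clause carries the whole content of the lemma. You set $\delta_{\lnot\varphi}(p):=\forall q\,(q\in\mathscr{P}\land q\le p\to\lnot\delta_{\varphi}(q))$ and justify it by asserting that $\mathscr{P}$ is $L(M)$-definable. It is not.

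A code is small iff its size is at most $b_0^{l}$ for some \emph{standard} $l$, and this external quantifier cannot be eliminated. Suppose some $L(M)$-formula defined $\mathscr{P}$. Then $\{\#p_{bij} \mid p\in\mathscr{P}\}$ would be a definable subset of $[n]$ equal to the cut $\{m \mid m\le b_0^{l}\ \mbox{for some}\ l\in\NN\}$. This set is nonempty, bounded by $n$, and has no maximum. That contradicts the fact, inherited by $M$ from the standard model, that every nonempty bounded definable set has a maximum. Your remark that the codes ``witness the surjection'' only shows that each small code is recognizable for a fixed $l$; it does not make the union over all $l$ definable. The same objection applies to your density clause for $\exists x<b_0$.

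You also cannot simply let $q$ range over all $M$-finite partial injections. The clause $p\Vdash\lnot\varphi$ refers only to small extensions, and $\delta_\varphi$ is only known to be correct on small conditions. You would have to show that whenever some small $q\le p$ forces $\varphi(\bar a)$, such a $q$ can be found inside a definable family of small conditions. That locality statement is the missing idea, and it needs a real argument. For instance, in the negation step an extension of a restricted condition need not be compatible with the original condition outside the relevant set.

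The paper supplies this via definable maximal antichains rather than a clause-by-clause translation. For $\psi(\bar z)\in\Delta^{b_0}_0(R)$ and parameters $\bar d$, the values of all terms of $\psi$ under $b_0$-bounded instantiations form a small set $T_{\psi}(\bar d)$. The set $X_{\psi,\bar d}$ of minimal partial injections covering all pigeons and holes of $T_{\psi}(\bar d)$ is cut out by a first-order condition whose solutions are automatically small. These sets are:
\begin{itemize}
\item uniformly definable maximal antichains of $\mathscr{P}$;
\item symmetric under negation, $X_{\psi,\bar d}=X_{\lnot\psi,\bar d}$;
\item refined appropriately along $\land$ and $b_0$-bounded quantifiers.
\end{itemize}
Lemmas 3.10--3.13 of \cite{partiallydefinableforcing} then give: $p$ is compatible with $[\psi(\bar d)]$ iff $p$ is compatible with some element of $X_{\psi,\bar d}$ forcing $\psi(\bar d)$. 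This quantifies only over the definable set $X_{\psi,\bar d}$. It becomes first order once compatibility of partial injections is written out: agreement on common pigeons, and on common holes including their singleton status. That is the only place where passing from bijections to partial injections matters.
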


\begin{proof}
Let $p \in \mathscr{P}$ and $\varphi(\bar{x}) \in \Delta^{b_{0}}_{0}(R)$, where $\bar{x}=(x_{1},\ldots,x_{l})$.
We show that $\{\bar{a} \mid p || \varphi(\bar{a})\}$ is an $L(M)$-definable set over $M$.
For each term $t(\bar{x},\bar{y})=t(x_{1},\ldots,x_{l},y_{1},\ldots, y_{m})$ occurring in $\varphi$, where $\bar{x}$ and $\bar{y}$ exhaust all the variables occurring in $t$, set
\[I_{t}(\bar{a}) := \{t(\bar{a},\bar{c}) \mid c_{1},\ldots, c_{m}< b_{0}\}\] 
for each $\bar{a}=(a_{1},\ldots,a_{l}) \in M^{l}$.
Note that $I_{t}(\bar{a})$ is small.
Furthermore, for each $a_{1},\ldots,a_{l} \in M$, let $T_{\varphi(\bar{x})}(a_{1},\ldots,a_{l})$ be the union of all the $I_{t}(\bar{a})$ for all the terms occurring in $\varphi(\bar{x})$.
Since the number of terms occurring in $\varphi(\bar{x})$ is finite, $T(\bar{a})$ is again small.

Now, we use Lemma 3.10-3.13 in \cite{partiallydefinableforcing}; for each $\Delta^{b_{0}}_{0}(R)$-formula $\psi(\bar{z})$, we construct uniformly definable maximal antichains $X_{\psi(\bar{z}),\bar{d}}$ for each $[\psi(\bar{d})]$, with which we can define
\[p||\psi(\bar{d}) \Longleftrightarrow \exists q \in X_{\psi,\bar{d}}.\ p||q.\]
Note that $p||q$ is definable since it is equivalent to: 
\begin{itemize}
 \item for each $\alpha \in \dom(p) \cap \dom(q)$, $p$ and $q$ map $\alpha$ to the same hole, and
 \item for each $\beta \in \ran(p) \cap \ran(q)$, $p$ and $q$ both have the singleton $\{\beta\}$ or both match $\beta$ with the same pigeon.
\end{itemize}

Let $X_{\psi(\bar{z}),\bar{d}}$ be the collection of all the minimal partial injections covering $T_{\psi(\bar{z})}(\bar{d})$, that is, the collection of partial injections $p$ such that:
\begin{enumerate}
 \item For each $\langle \alpha,\beta \rangle \in p_{bij}$, $\alpha \in T_{\psi(\bar{z})}(\bar{d})$ or $\beta \in T_{\psi(\bar{z})}(\bar{d})$.
 \item For each $\{\beta\} \in p_{sing}$, $\beta \in T_{\psi(\bar{z})}(\bar{d})$.
 \item For each $\gamma \in T_{\psi(\bar{z})}(\bar{d})$, 
 \begin{itemize}
  \item a pair of the form $\langle \gamma,\beta \rangle$ is in $p$, and
  \item a pair of the form $\langle \alpha, \gamma \rangle$ or a single $\{\gamma\}$ is in $p$ if $\gamma \in [n]$.
  \end{itemize}
\end{enumerate}

We observe the following:
\begin{enumerate}
 \item each condition in $X_{\psi(\bar{z}),\bar{d}}$ is small, and therefore each $X_{\psi(\bar{z}),\bar{d}}$ is a maximal antichain of $\mathscr{P}$. 
 \item for each $\psi(\bar{z}) \in \Delta^{b_{0}}_{0}(R)$, $\{(p,\bar{d}) \mid p \in X_{\psi(\bar{z}),\bar{d}}\}$ is definable.
 \item for each $\psi(\bar{z})\in \Delta^{b_{0}}_{0}(R)$, $X_{\psi(\bar{z}),\bar{d}} = X_{\lnot\psi(\bar{z}),\bar{d}}$.
 \item for each $\psi(\bar{z}), \theta(\bar{z}) \in \Delta^{b_{0}}_{0}(R)$, $X_{\psi(\bar{z})\land \theta(\bar{z}),\bar{d}}$ refines $X_{\psi(\bar{z}),\bar{d}} = X_{\lnot\psi(\bar{z}),\bar{d}}$ and $X_{\theta(\bar{z}),\bar{d}} = X_{\lnot\theta(\bar{z}),\bar{d}}$.
 \item for each $\psi(w,\bar{z})$, $X_{\exists w < b_{0}.\ \psi(w,\bar{z})}$ refines 
 \[\bigcup_{e<^{M}b_{0}}X_{\psi(w,\bar{z}),(e,\bar{d})} = \bigcup_{e<^{M}b_{0}}X_{\lnot\psi(w,\bar{z}),(e,\bar{d})}.\]
 \end{enumerate}

Therefore, by \cite{partiallydefinableforcing} Lemma 3.13, each $X_{\psi(\bar{z}),\bar{d}}$ is a maximal antichain of $[\psi(\bar{d})]$, with which we can define $p || \varphi$ by \cite{partiallydefinableforcing} Lemma 3.10.
\end{proof}

Now, by the very same argument as the proof of Theorem 4.3 of \cite{partiallydefinableforcing}, we obtain the following:
\begin{prop}
Let $G$ be a generic filter of $\mathscr{P}$.
Then $M[G]$ induced by the forcing frame $\mathcal{F}$ is an $L^{*}$-expansion $(M,R^{M})$ of $M$ such that:
\begin{enumerate}
 \item $R^{M}$ codes an injection from the whole universe $M$ into $[n]$.
 \item $M[G]$ satisfies the least number principle for $\Sigma^{b_{0}}_{1}(R)$.
\end{enumerate}
\end{prop}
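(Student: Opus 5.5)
The plan is to follow the proof of Theorem 4.3 of \cite{partiallydefinableforcing} step by step. The only change is the poset: small partial \emph{injections}, with singletons recording holes declared empty, replace small bijections. Let $G$ be generic for $\mathcal{F}$, so $G$ meets every $D_i$. We define $R^{M[G]}:=\bigcup_{p\in G}p_{bij}$. By the general theory of \S 2 of \cite{partiallydefinableforcing}, $M[G]$ satisfies exactly the sentences forced by some $p\in G$ (the forcing theorem for universal pre-forcings), and its $L$-reduct is $M$ itself.

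For item 1, we use that $G$ is a filter, so any two of its conditions are compatible. Compatible partial injections agree on common pigeons and on common holes, so $\bigcup_{p\in G}p_{bij}$ is a partial injection from $[n+1]$ into $[n]$. Genericity over the dense sets $\{p\mid a\in\dom(p)\}$ makes it total on $[n+1]$. This needs density, and here smallness is used. Given any $p$ and a pigeon $a\notin\dom(p)$, the set $\ran(p)$ is small, hence of size at most $b_0^{l}<n$, so a free hole $h$ exists and $p\cup\{\langle a,h\rangle\}$ is again small. Density of $\{p\mid c\in\ran(p)\}$ holds because we may always add the singleton $\{c\}$. Since the language is countable, the enumeration $D_0,D_1,\ldots$ can be taken to also cover the pigeons in $M$ beyond $[n+1]$, provided the poset is read as partial injections from $M$ into $[n]$. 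This matches the statement's ``from the whole universe'', and the density argument is unchanged. Then $R$ is a total injection from $M$ into $[n]$.

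For item 2, we fix a $\Sigma^{b_0}_1(R)$-formula $\exists y\,\psi(x,y)$ with $\psi\in\Delta^{b_0}_0(R)$, and suppose some witness $a$ satisfies it in $M[G]$. We must find a least one. First, we get $p\in G$ forcing $\psi(a,c)$ for some $c$. By the preceding Lemma, the relation $q\Vdash\psi(x,y)$ is $L(M)$-definable. We then consider the $L(M)$-definable set of $x\le a$ for which some $q\le p$ and some $y$ satisfy $q\Vdash\psi(x,y)$. This set is nonempty, and $M$ satisfies full induction because it is an elementary extension of $\mathbb{N}$ in a language with sequence coding. So it has a least element $x_0$, witnessed by some $q_0\le p$.

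The main obstacle is showing that $x_0$ is actually least in $M[G]$, via a density argument. We claim that for each $x<x_0$ the set of conditions forcing $\lnot\exists y\,\psi(x,y)$ is dense below $p$. Let $r\le p$. If some $r'\le r$ forced $\psi(x,y)$ for some $y$, this would contradict the minimality of $x_0$. Hence $r$ forces $\forall y\,\lnot\psi(x,y)$ by the definition of universal forcing. Second, we need a single condition in $G$ forcing $\exists y\,\psi(x_0,y)$. We would first try the route of \cite{partiallydefinableforcing} Thm.~4.3: show that the set of $r\le p$ that either force $\exists y\,\psi(x_0,y)$ or force $\lnot\exists y\,\psi(x_0,y)$ is dense, and derive a contradiction from the second alternative when $q_0$ is compatible with $G$. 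If $q_0$ is not compatible with $G$, we instead use the maximal antichains $X_{\psi,\bar d}$ from the Lemma. These are built from minimal covers of small sets of pigeons and holes, and every $p$ is compatible with one of their members. This lets us replace $q_0$ by the element of $X_{\psi,(x_0,y_0)}$ that lies in $G$. The formula being decided only mentions a small set of atoms, so any two such elements decide it identically. Finally, $\Sigma^{b_0}_1(R)$ formulas with parameters in $M[G]=M$ are handled uniformly, so the least number principle follows.
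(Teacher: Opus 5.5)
Item 1 is fine. Reading $\mathscr{P}$ as in Remark \ref{mostweakPHPisindependent} is what the phrase ``whole universe'' needs; for the paper's application, totality on $[n+1]$ is enough.

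\textbf{The gap is in item 2.} You minimize only once, below the fixed $p\in G$: $x_0$ is the least $x\le a$ such that \emph{some} extension of $p$ forces $\psi(x,y)$ for some $y$. This does give $p\Vdash\forall y\,\lnot\psi(x,y)$ for $x<x_0$. But nothing makes $x_0$ a witness in $M[G]$, because the witnessing $q_0$ may be incompatible with $G$.

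Your repair for that case is false. Distinct members of $X_{\psi,(x_0,y_0)}$ are distinct minimal covers of the relevant pigeons and holes, and they need not decide $\psi(x_0,y_0)$ the same way. For instance, take $\psi(x,y):\equiv (x=0\land R(\alpha,\beta))\lor x=1$ and $a=1$, and suppose the $p\in G$ you picked is $\emptyset$, which forces $\psi(1,c)$. Then $x_0=0$. Yet for a generic $G$ containing $\{\langle\alpha,\beta'\rangle\}$ with $\beta'\neq\beta$, the least witness in $M[G]$ is $1$, and the member of $X_{\psi,(0,y)}$ lying in $G$ forces $\lnot\psi(0,y)$.

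The ``dense dichotomy'' route fails for the same reason. $G$ may contain a condition forcing $\lnot\exists y\,\psi(x_0,y)$, and no contradiction follows unless $q_0$ is compatible with it.

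\textbf{The missing idea} is to do the minimization below \emph{every} extension, inside a density argument.

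Let $E$ be the set of conditions $q$ for which there are $x\le a$ and $y$ such that
\begin{itemize}
\item $q\Vdash\psi(x,y)$, and
\item $q\Vdash\lnot\psi(x',y')$ for all $x'<x$ and all $y'$.
\end{itemize}
By the preceding Lemma (and since $\Delta^{b_0}_0(R)$ is closed under negation), $E$ is $L(M)$-definable.

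$E$ is dense below $p$. Take $r\le p$; then $r\Vdash\psi(a,c)$ by Extension. Induction in $M$ yields a least $x_1\le a$ such that some $r'\le r$ forces $\psi(x_1,y_1)$ for some $y_1$. Such an $r'$ lies in $E$: any extension of $r'$ forcing $\psi(x',y')$ with $x'<x_1$ would be an extension of $r$, contradicting the choice of $x_1$.

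Hence $G$ meets $E$; equivalently, $p$ forces this instance of the least number principle. For $q\in G\cap E$, the Truth Lemma applied to the $\Delta^{b_0}_0(R)$-sentences $\psi(x,y)$ and $\lnot\psi(x',y')$ shows that its $x$ is the least witness in $M[G]$. This is the argument of Theorem 4.3 of \cite{partiallydefinableforcing} to which the paper defers, and with it the maximal-antichain detour is unnecessary.
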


As stated in the \cite{partiallydefinableforcing} Remark 4.4, we can extract a model of $T^{1}_{2}(R)$ from $M[G]$ in the previous Proposition:
\begin{cor}\label{T12withextremeinjection}
 The cut $I$ of $M[G]$ generated by $\{2^{|{n}|^{k}} \mid k \in \NN\}$ is a model of $T^{1}_{2}(R)$.
\end{cor}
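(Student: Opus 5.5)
The plan is to follow Remark 4.4 of \cite{partiallydefinableforcing}, where the analogous statement is derived for the model built from small bijections. Two things must be checked: that $I$ is a well-defined initial segment closed under the relevant operations, and that $I$ satisfies both the open axioms and $\Sigma^{b}_{1}(R)$-induction.

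First, $I$ is closed under $+$, $\cdot$ and $\#$. The point is that if $x,y \leq 2^{|n|^{k}}$, then $x \# y = 2^{|x||y|} \leq 2^{|n|^{2k}}$. Since $M$ is an elementary extension of the standard model, $I$ is a cut of $M$. We take $R^{I} := R^{M[G]} \cap I^{2}$. The open ($BASIC$) axioms of $T^{1}_{2}(R)$ are universal statements that hold in $M$, hence in $M[G]$, and so they also hold in the substructure $I$.

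The main step is $\Sigma^{b}_{1}(R)$-induction in $I$. We use the equivalent form of the least number principle, or equivalently length-induction, for $\Sigma^{b}_{1}(R)$ formulas. Take a $\Sigma^{b}_{1}(R)$ formula $\varphi(x,\bar{a})$ with parameters in $I$. All its bounding terms are $L$-terms evaluated at elements of $I$, so they lie below some $2^{|n|^{k}}$. The key observation is that $|2^{|n|^{k}}| \approx |n|^{k} < b_{0}$, because $|n|^{k} <^{M} b_{0}$.

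The task is then to rewrite the sharply bounded quantifiers in $\varphi$ as $b_{0}$-bounded quantifiers. The bounded existential quantifier $\exists y \leq t$ becomes an unrestricted existential quantifier, restricted to $y \leq t$ by a quantifier-free conjunct. This gives a $\Sigma^{b_{0}}_{1}(R)$ formula $\varphi'$ over $M[G]$ that agrees with $\varphi$ on $I$.

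One point needs care. A witness found in $M[G]$ is bounded by $t$, and $t \in I$, so the witness lies in $I$. Hence $\Sigma^{b}_{1}$ formulas are absolute between $I$ and $M[G]$ for parameters in $I$. Given a nonempty set $\{x \leq c : \varphi(x)\}$ with $c \in I$, the least number principle for $\Sigma^{b_{0}}_{1}(R)$ in $M[G]$, from the preceding Proposition, yields a least element. That element is $\leq c$, so it lies in $I$, and by absoluteness it is also least in $I$.

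The most delicate point is translating the sharply bounded quantifiers, including nested ones, into $b_{0}$-bounded form while keeping the formula inside $\Sigma^{b_{0}}_{1}(R)$. We will handle this by using the sequence-coding functions $lh$ and $(\cdot)_{\cdot}$ in $L$ together with the strong-collection-free argument of Remark 4.4 of \cite{partiallydefinableforcing}, and we verify that nothing there depends on $R$ being a bijection rather than a partial injection. As a side remark, $R^{I}$ still codes an injection of $I$ into $[n]$, because $n \in I$.
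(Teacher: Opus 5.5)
Your proposal is correct and follows essentially the paper's route: the paper only cites Remark 4.4 of \cite{partiallydefinableforcing}, and you have spelled out what that remark amounts to. Concretely, you check that $I$ is closed under the operations, that the open axioms pass down to a substructure, and that the $\Sigma^{b_{0}}_{1}(R)$ least number principle transfers from $M[G]$ to $I$, because sharply bounded quantifiers over $I$ range below $|n|^{O(1)}<b_{0}$ and bounded witnesses stay in $I$.

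Two small fixes are needed. First, the least number principle for $\Sigma^{b}_{1}(R)$ is equivalent to $\Sigma^{b}_{1}(R)$-induction, not to length-induction, which only gives $S^{1}_{2}(R)$. Second, your ``delicate point'' needs no sequence coding: the paper's $\Sigma^{b_{0}}_{1}(R)$ already admits unrestricted $\exists$ inside $b_{0}$-bounded quantifiers, and alternatively you can axiomatize $T^{1}_{2}(R)$ by strict $\Sigma^{b}_{1}(R)$-induction. Coding the witnesses instead would require a collection argument for $R$-dependent sequences, which $M$ alone does not supply.
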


The main result in this section is the following:
\begin{thm}\label{UCPvsinjPHPmodelsolution}
The model $I$ in the Corollary \ref{T12withextremeinjection} satisfies $\forall l,d,k.\ UCP^{l,d}_{k}(\Delta^{b}_{1}(R))$.
\end{thm}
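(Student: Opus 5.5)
We argue by contradiction, following the strategy of the proof of Theorem 53 in \cite{UCPvsinjPHPrevised}: a violation of $UCP$ in $I$ yields a low-degree Nullstellensatz refutation of $injPHP^{m+1}_{m}$ over $\ZZ_{d}$ for some nonstandard $m$, contradicting \cite{Razborov}.

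Suppose that for some $l,d,k \in I$ with $d \geq 1$ and $d \nmid k$, and some $\Delta^{b}_{1}(R)$-formula $\psi(i,j,e)$ (with parameters from $I$), the hypotheses of $UCP^{l,d}_{k}(\psi)$ all hold in $I$. First, a $\Delta^{b}_{1}(R)$-formula is equivalent in $I$ to a formula that the forcing can decide by small conditions. Rescaling $b_{0}$ to $2^{|n|^{c}}$-type bounds, as in \S4 of \cite{partiallydefinableforcing}, each instance $\psi(\bar a)$ is then forced by a condition from a definable maximal antichain of small conditions, and its negation likewise. By the definability lemma above, both $\psi$ and $\lnot\psi$ are decided by uniformly definable families of small partial injections, i.e.\ by definable ``$injPHP$-decision trees'' $S_{i,j,e}$ of small height whose branches are labelled $0/1$.

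Next, take $p \in G$ forcing the conjunction of the four $UCP$-hypotheses. By genericity and the Truth Lemma, each hypothesis translates into a combinatorial statement about the trees below $p$, exactly mirroring observations (i)--(iii) and the $N_{i}$/$S_{i,j}$ analysis in the proof of the Main Result. Concretely:
\begin{itemize}
\item any two $1$-branches for $(i,j,e)$ and $(i',j',e)$ are incompatible;
\item any two $1$-branches for $(i,j,e)$ and $(i,j,e')$ are incompatible;
\item every branch of $S_{e}$ extends a $1$-branch of some $S_{i,j,e}$;
\item for each $i$, either all $j$ are occupied or none is.
\end{itemize}
Restricting everything by $p$ and repeating the construction of $X_{i,j}$ and $Y_{e}$ from the proof of the Main Result produces PHP-arrays in the sense of \cite{Mykyta'smodels}. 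These satisfy $br_{\perp}(X_{i,1})=\cdots=br_{\perp}(X_{i,d})$ and $br_{\langle i,j,e\rangle}(X_{i,j})=br_{\langle i,j,e\rangle}(Y_{e})$, with small heights relative to the remaining free pigeons and holes, of which there are $m \approx n$ since $p$ is small.

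Now take the definable objects and use elementarity ($M \succ \NN$-expansion) to transfer the statement ``there exist such $\ZZ_{d}$-arrays of height $h$ for $injPHP^{m+1}_{m}$ with $d\nmid k$'' to standard finite $m$ with $h=o(m)$. The arrays yield, as in Theorem 53 of \cite{UCPvsinjPHPrevised} and its Corollary 25, an NS-refutation of $injPHP^{m+1}_{m}$ over $\ZZ_{d}$ of degree $O(h)$. Summing the $Y_{e}$ gives $k \cdot 1$, while summing the $X_{i,j}$ gives a multiple of $d$; the difference $k \not\equiv 0 \pmod{d}$ is a unit modulo the relevant prime divisor. This contradicts Razborov's linear degree lower bound.

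The main obstacle is the first step: the heights of the decision trees must be controlled so that they are $o(m)$. Since $\Delta^{b}_{1}$ formulas in $I$ may quantify up to $2^{|n|^{O(1)}}$, this requires choosing $b_{0}$ above those bounds but still $n^{o(1)}$. The pigeon-first querying trick is needed here to avoid singletons in the witnessing conditions. A further difficulty is handling $d$ nonstandard by working modulo a prime factor of $d$ not dividing $k$.
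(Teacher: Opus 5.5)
Your overall architecture is the paper's: take $p\in G$ forcing $\lnot UCP^{l,d}_{k}(\theta)$, turn the forced hypotheses into incompatibility and totality facts about shallow $injPHP$-trees, build $X_{i,j}$ and $Y_{e}$, and contradict Razborov's bound by arithmetical transfer. The step everything rests on, however, is producing shallow trees that decide $\theta(i,j,e)$, and the mechanism you propose for it cannot work. You ask for $b_{0}$ above the quantifier bounds $2^{|n|^{O(1)}}$ of $\theta$ yet below $n^{o(1)}$. Since $2^{|n|}>n>b_{0}$, no such $b_{0}$ exists. Moreover, the definability lemma only handles $\Delta^{b_{0}}_{0}(R)$-formulas, which $\theta$ is not.

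The missing idea is that the size of the quantifier ranges is irrelevant. Because $\theta$ is $\Delta^{b}_{1}(R)$ in $S^{1}_{2}(R)$, it is decided by a polynomial-time oracle machine, which on inputs from $I$ asks only $|n|^{O(1)}$ queries to $R$. Simulating its query tree by an $injPHP$-tree over $([n+1],[n])$ gives a uniformly definable tree of height $|n|^{O(1)}<b_{0}$ whose $1$-branches force $\theta(\bar a)$ and whose $0$-branches force $\lnot\theta(\bar a)$ (Lemma~\ref{shallowDTdecidestheta}). The pigeon-first trick belongs to the switching argument of \S\ref{evaluation} and has no role here: no random restriction is needed, and conditions may contain singletons.

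A second gap is hidden in ``repeating the construction of $X_{i,j}$ and $Y_{e}$ from the proof of the Main Result''. There, the trees for $\bigvee_{(i,j)}\theta_{i,j,e}$, $\bigvee_{e}\theta_{i,j,e}$ and $\bigwedge_{j',e'}\lnot\theta_{i,j',e'}$ were supplied by the $o(n)$-evaluation. Here these disjunctions range over sets of size up to $2^{|n|^{O(1)}}$, and nothing provides a shallow tree for them; your $S_{e}$ is never defined. The paper builds these trees with Lemma~\ref{refiningtreelemma}: an antichain of partial injections, each of size at most $s$, is refined by an $injPHP$-tree of height at most $2s^{2}$, \emph{independently of the cardinality of the antichain}. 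Set $A_{i,j,e}=br_{1}(T^{p}_{\theta,(i,j,e)})$; the argument then runs as follows.
\begin{itemize}
\item The two forced uniqueness hypotheses make $\bigcup_{(i,j)}A_{i,j,e}$ and $\bigcup_{e}A_{i,j,e}$ antichains, so the lemma applies.
\item Forced totality gives, for each branch $b$ of $S_{e}$, a small $r\le bp$ forcing $\theta(i_{0},j_{0},e_{0})$. Such an $r$ is compatible with some branch of that $\theta$-tree, which must then be a $1$-branch. Hence every branch of $S_{e}$ extends a member of the antichain.
\item The forced ``all or nothing'' hypothesis is used in the same way to control the $\perp$-branches of $V_{i,j}$.
\end{itemize}

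Finally, your reduction to a prime field fails in general: for $d=4$, $k=2$, no prime factor of $d$ avoids $k$. The paper instead derives the refutation over $\ZZ_{d}$ and invokes Corollary~25 of \cite{UCPvsinjPHPrevised}.
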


Towards the goal, we prepare two lemmas:
\begin{lemma}\label{shallowDTdecidestheta}
Let $\theta(x_{1},\ldots, x_{k}) \in \Delta^{b}_{1}(R)$, where $\bar{x}=(x_{1},\ldots,x_{k})$ exhausts all the free variables occurring in $\theta$, and let $a_{1}, \ldots, a_{k} \in I$.
Then there exists a $\{0,1\}$-labeled $injPHP$-tree $T_{\theta(\bar{x}),\bar{a}}$ over $([n+1],[n])$ of height $\leq |{n}|^{O(1)} < b_{0}$ such that:
\begin{itemize}
 \item For each $p \in br_{1}(T_{\theta(\bar{x}),\bar{a}})$, $p \Vdash \theta(\bar{a})$.
 \item For each $p \in br_{0}(T_{\theta(\bar{x}),\bar{a}})$, $p \Vdash \lnot\theta(\bar{a})$.
\end{itemize}  
Furthermore, $\{T_{\theta(\bar{x}),\bar{a}}\}_{\bar{a}}$ can be chosen so that it is a uniformly definable family of definable sets in $M$.
\end{lemma}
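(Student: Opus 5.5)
The plan is to reduce the $\Delta^{b}_{1}(R)$-formula to sharply bounded formulas and then evaluate those by a canonical decision tree built from the minimal-cover antichains in the definability lemma above. Since $\theta$ is $\Delta^{b}_{1}(R)$ in $S^{1}_{2}(R)$, it is equivalent, in the relevant fragment, to a formula $\exists y\le t.\,\theta_{0}(\bar x,y)$ and to $\forall y\le t.\,\theta_{1}(\bar x,y)$ with $\theta_{0},\theta_{1}$ sharply bounded. Because $S^{1}_{2}(R)$ is $\Sigma^{b}_{1}(R)$-conservative under the witnessing theorem, we may also use the $\Delta^{b}_{1}$-definition via a polynomial-time oracle machine. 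I would adopt that route: $\theta(\bar a)$ is decided by a polynomial-time (in $|\bar a|\le|n|^{O(1)}$) deterministic computation with oracle $R$. That computation makes at most $|n|^{O(1)}$ queries $R(s,t)$, and it is definable in $M$, since $L$ contains enough arithmetic and $M\succ\NN$.

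The tree $T_{\theta(\bar x),\bar a}$ is then obtained by simulating the oracle machine, and is defined inside $M$ by recursion on the computation. At each oracle query $R(s,t)$ that is not yet decided by the current branch, we branch as in an $injPHP$-tree. If $s\in[n+1]$, we query pigeon $s$: the children are its possible holes outside the current range. If $t\in[n]$, we query hole $t$: the children are the pigeons mapped to it, or the singleton $\{t\}$. If the atom is out of range, its value is $0$ and we do not branch. After these queries the answer to $R(s,t)$ is fixed by the branch's partial injection $p$, via $(s,t)\in p_{bij}$, and we continue the computation. Each leaf receives the label given by the machine's output. The height is bounded by twice the number of queries, hence by $|n|^{O(1)}<b_{0}$. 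Every branch is therefore a small partial injection, so it lies in $\mathscr{P}$. Uniform definability in $\bar a$ follows because the whole simulation is a definable recursion of length $|n|^{O(1)}$ in $M$.

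The correctness step is to show that for $p\in br_{1}(T)$ we have $p\Vdash\theta(\bar a)$, and symmetrically for label $0$. Take any generic $G\ni p$. In $M[G]$ the relation $R$ extends $p$, so every oracle answer along the simulated run coincides with $R^{M[G]}$. Hence the run computed in $I\subseteq M[G]$ outputs $1$. Since $I\models T^{1}_{2}(R)$ (Corollary \ref{T12withextremeinjection}) proves that the machine decides $\theta$, this gives $M[G]\models\theta(\bar a)$. Because this holds for every generic through $p$, and $\Vdash$ is a forcing with the truth lemma of \cite{partiallydefinableforcing}, we conclude $p\Vdash\theta(\bar a)$. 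The same argument with output $0$ gives $p\Vdash\lnot\theta(\bar a)$.

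The main obstacle is the passage from ``$\theta\in\Delta^{b}_{1}(R)$ provably in $S^{1}_{2}(R)$'' to a uniform polynomial-time oracle decision procedure that is valid in the nonstandard model. The procedure must be definable in $M$ with the height bound $|n|^{O(1)}$ computed internally, and its agreement with $\theta$ must hold in $I$ rather than only in the standard model. I would first try Buss's witnessing theorem for $S^{1}_{2}(R)$ applied to both $\Sigma^{b}_{1}$ forms: this yields $\Box^{p}_{1}(R)$ witnessing functions whose correctness is provable in $S^{1}_{2}(R)\subseteq T^{1}_{2}(R)$, and therefore holds in $I$. If that fails, the fallback is to handle the sharply bounded matrix directly by the antichains $X_{\psi,\bar d}$ from the definability lemma. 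Those antichains are, however, of small width rather than a tree of $|n|^{O(1)}$ height, so they would need to be sequentialized into one.
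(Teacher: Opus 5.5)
Your proposal is correct and is essentially the paper's own argument: you take the polynomial-time oracle procedure deciding $\theta$ and turn its query tree into an $injPHP$-tree over $([n+1],[n])$ by querying the pigeon and hole of each in-range oracle query (out-of-range queries are answered ``no''). You then label the leaves by the output and conclude by observing that every generic $G \ni p$ makes the induced computation path true.

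You are in fact more explicit than the paper on the two points it leaves implicit. First, the procedure's correctness holds in $I$ because it comes from Buss's witnessing theorem and is provable in $S^{1}_{2}(R)$, and it then transfers to $M[G]$ by absoluteness of bounded formulas. Second, truth in every generic extension through $p$ yields $p \Vdash \theta(\bar{a})$ via the truth lemma together with the existence of generic filters below any condition.
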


\begin{proof}
Since $\theta$ is $\Delta^{b}_{1}(R)$, there exists a constant $C>0$ such that $\theta(\bar{a})$ can be decided by an polytime oracle Turing machine with $|{n}|^{C}$-many queries of the form ``$R(i,j)$ is true?'' for any inputs $\bar{a} \in I$.
We may assume that $i \in [n+1]$ and $j \in [n]$ by fixing the answer to ``no'' for ``$R(i,j)$ is true?'' for $i \not \in [n+1]$ or $j \not \in [n]$. 
Therefore, in $M$, $\theta(\bar{a})$ is decided by a binary tree $B$ of height $\leq |{n}|^{C}$.
Let $T_{\theta(\bar{x}),\bar{a}}$ be an $injPHP$-tree over $([n+1],[n])$ asking the pigeons and holes in queries in $B$ along $B$.
The height of $T_{\theta(\bar{x}),\bar{a}}$ is at most $\leq 2|{n}|^{C}$, and each branch $b \in T_{\theta(\bar{x}),\bar{a}}$ induces a branch $b'$ of $B$.
We label $b$ with $1$ if $\theta(\bar{a})$ is true along $b'$ and with $0$ otherwise.
Then $T_{\theta(\bar{x}),\bar{a}}$ is a desired $\{0,1\}$-labeled $injPHP$-tree.
Indeed, for $p \in br_{1}(T_{\theta(\bar{x}),\bar{a}})$, consider the induced branch $b'$ of $B$.
We have that, for each query ``$R(i,j)$ is true?'' asked along $b'$, 
\begin{itemize}
 \item if the answer in $b'$ is ``yes,'' $p \Vdash R(i,j)$.
 \item if the answer in $b'$ is ``no,'' $p \Vdash \lnot R(i,j)$.
\end{itemize}
Therefore, for any generic filter $G$ including $p$, $\theta(\bar{a})$ is true because the branch $b'$ of $B$ is satisfied by $R^{M}$. 

\end{proof}

\begin{lemma}\label{refiningtreelemma}
Let $A$ be an antichain of partial injections from $[n+1]$ into $[n]$ of size $\leq s$ in a standard model $\NN$.
Then there exists a $\{0,1\}$-labeled $injPHP$-tree $T$ over $([n+1],[n])$ of height $\leq 2s^{2}$ refining $A$, that is:
\begin{itemize}
 \item for each $b \in br_{1}(T)$, there exists $p \in A$ such that $p \subseteq b$ as partial injections.
 \item for each $b \in br_{0}(T)$, $b \perp p$ for arbitrary $p \in A$ as partial injections.
\end{itemize}
\end{lemma}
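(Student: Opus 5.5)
We will build $T$ by the standard ``query the first surviving element'' procedure, analogous to the canonical $injPHP$-tree of Example 49 in \cite{UCPvsinjPHPrevised}, but with a round-based height analysis in the style of the Håstad/Ajtai decision-tree argument. The key quantity is the size $s$ of the partial injections in $A$ (we read ``of size $\leq s$'' as bounding the size of each member of $A$, counted as the number of pigeons and holes mentioned). We construct $T$ recursively. At a node whose path is the partial injection $b$, consider $A^{b}:=\{p \in A \mid p \,||\, b\}$. If $A^{b}=\emptyset$, stop and label the leaf $0$. If some $p\in A$ satisfies $p\subseteq b$, stop and label the leaf $1$. Otherwise, let $p$ be the first element of $A^{b}$ in a fixed enumeration. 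Query, one by one, every pigeon in $\dom(p)$ and every hole in $\ran(p)$ not yet covered by $b$, branching over all consistent answers for each pigeon (its hole) and each hole (its pigeon or ``empty''). Then recurse below each resulting extension. The two labelling conditions then hold by construction, and the branches are automatically partial injections, so $T$ is an $injPHP$-tree.

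The main step is the height bound. Along any path, call each handling of a first-compatible $p$ a round; a round adds at most $2s$ queries, at most one per pigeon or hole of $p$. After a round that handled $p$, the current branch $b'$ covers all of $p$'s pigeons and holes. Hence either $p\subseteq b'$, and we stop with label $1$, or $p\perp b'$. We must bound the number of rounds by about $s$. For this we fix an element $p_{0}$ of $A$ that survives to the end of the path, or more carefully we argue as follows.

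Use that $A$ is an antichain. Suppose rounds handle $p_{1},p_{2},\ldots$. In round $i$ the path becomes incompatible with $p_{i}$ but still compatible with some later $p_{m}$ if the process continues. So take any $q\in A$ compatible with the branch after round $t-1$, for instance $q=p_{t}$. The claim is that each earlier round $i<t$ queried at least one new pigeon or hole of $q$. Otherwise $p_{i}$ became incompatible with the branch through queries disjoint from $q$'s support. Since $q$ is compatible with both, the step to prove is that $p_{i}$'s elements outside the already-queried set were answered inconsistently with $p_{i}$, while $q$ agrees wherever it overlaps. Pinning this down, possibly by choosing the ordering or the query order carefully, is where I expect the main obstacle. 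Given the claim, $q$ has at most $s$ points, so $t\leq s+1$ rounds, and the height is at most $2s\cdot(s+1)\approx 2s^{2}$.

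If this direct charging fails, I would instead query only items of $p_{i}$ that still unify with some compatible $q$, or else redefine the bound via the union of supports. Definability in $\NN$ is trivial, since everything is finite.
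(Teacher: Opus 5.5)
Your tree (repeatedly query every uncovered pigeon and hole of the first member of $A$ compatible with the current branch) is a valid construction, and the two labelling conditions hold by construction. The gap is the height bound, the only nontrivial part of the lemma. You leave it open exactly where the antichain hypothesis must be used, and the sentence you use there is false: $q$ is \emph{not} compatible with $p_i$.

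Here is the missing step. Round $i$ did not end with label $1$, so $p_i\perp b_i$, while $q\,||\,b_{t-1}\supseteq b_i$. Hence $q\neq p_i$, and since $A$ is an antichain, $q\perp p_i$. So there is a pigeon or hole $x$ in both supports at which $p_i$ and $q$ disagree: one pigeon sent to two different holes, one hole receiving two different pigeons, or a pigeon in one and ``empty'' in the other. Both $p_i$ and $q$ are compatible with the branch $b_{i-1}$ before round $i$. Therefore $x$ cannot be covered by $b_{i-1}$, since otherwise both would have to agree with the value $b_{i-1}$ assigns to $x$. So round $i$ queries $x$, and because $q\,||\,b_i$ the answer agrees with $q$. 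Thus the whole element of $q$ containing $x$ enters the branch during round $i$. No special ordering or fallback is needed.

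This also repairs your arithmetic, which as written gives $2s(s+1)>2s^2$. Rounds $1,\ldots,t-1$ put $t-1$ distinct elements of $p_t$ into $b_{t-1}$. Also $p_t\not\subseteq b_{t-1}$, since otherwise that node would already be a $1$-leaf. Hence $t\le |p_t|\le s$, and with at most $2s$ queries per round the height is at most $2s^2$.

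For comparison, the paper builds the same kind of tree recursively and inducts on $s$. It queries all pigeons and holes of one $p\in A$ via a tree $U$ of height $\leq 2(s+1)$, and for each branch $c$ passes to the restricted antichain $A^c=\{q\setminus c\mid q\in A,\ q\,||\,c\}$. There it only needs the weaker consequence of the antichain property that every $q\in A$ shares a pigeon or hole with $p$. So every surviving $q$ loses an element, and $2s^2+2(s+1)\le 2(s+1)^2$ closes the induction. Working in the restricted universe makes ``the shared point is new'' automatic, which is exactly what your per-path charging has to prove by hand.
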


\begin{rmk}
The statement is arithmetic and we have that it holds also in our ground model $M$.
\end{rmk}

\begin{proof}
 By induction on $s$.
 When $s=0$, $A=\{\emptyset\}$ or $A=\emptyset$, and the tree of height $0$ (that is, the root only) with the label $1$ or $0$ respectively suffices.

 Assume the statement is true for $s$.
 Let $A$ be an antichain of partial injections of size $\leq s+1$.
 We may assume $A\neq \emptyset$ by the case of $s=0$, and we take $p \in A$.
 Consider an $injPHP$-tree $U$ querying all the pigeons in $\dom(p)$ and all the holes in $\ran(p)$.
 The height of $U$ is at most $\leq 2(s+1)$.
 For each branch $c$ of $U$, consider 
 \[A^{c} := \{q\setminus c  \mid q \in A \ \& \ q || c\}.\]
 Since $A$ is an antichain, for any $q \in A$, $\dom(q) \cap \dom(p) \neq \emptyset$ or $\ran(q) \cap \ran(p) \neq \emptyset$.
 Therefore, each $A^{c}$ is an antichain of partial injections of size $\leq s$, and there exists an $\{0,1\}$-labeled tree $T_{c}$ refining $A^{c}$.
 Let 
 \[T:=U*\sum_{c \in br(U)}T_{c}^{c}.\] 
 Then $T$ is a desired $\{0,1\}$-labeled $injPHP$-tree.
 Indeed, 
 \[height(T) \leq 2s^{2}+2(s+1) \leq 2(s+1)^{2}.\]
 Furthermore, for a branch $b \in br_{1}(T)$, consider the decomposition $b=cd^{c}$, where $c \in br(U)$ and $d \in br_{1}(T_{c})$. 
 Since $d \in br_{1}(T_{c})$ and $T_{c}$ is a refinement of $A^{c}$, there exists $r \in A$ such that $r^{c} \subseteq d$. 
 Therefore,
 \[r \subseteq cd^{c} = b.\]
 
 Moreover, for a branch $b \in br_{0}(T)$, consider the decomposition $b=cd^{c}$, where $c \in br(U)$ and $d \in br_{0}(T_{c})$.
 For any $r \in A$ with $r||c$, $r^{c} \in A^{c}$, and $r^{c} \perp d$.
 Therefore, $r \perp cd^{c}=b$. 

\end{proof}

\begin{proof}[Proof of Theorem \ref{UCPvsinjPHPmodelsolution}]
Suppose there exists $l,d,k \in I$ and a $\Delta^{b}_{1}(R)$-formula $\theta$ such that 
\[I \models \lnot UCP^{l,d}_{k}(\theta).\]
$UCP^{l,d}_{k}(\theta)$ is a bounded formula, and $I$ is a cut of $M[G]$, therefore, by the absoluteness, it follows that
\[M[G] \models \lnot UCP^{l,d}_{k}(\theta).\] 
By Truth Lemma (\cite{partiallydefinableforcing} Lemma 2.19), there exists $p \in G$ such that $p \Vdash \lnot UCP^{l,d}_{k}(\theta)$.
Note that $\Vdash$ is closed under logical consequences (cf. \cite{partiallydefinableforcing} Corollary 2.20.(2)), and therefore we have the following:
\begin{enumerate}
\item\label{emptyord-set} $p \Vdash \forall i \in [l] \left( (\forall j' \in [d] \forall e' \in [m].\ \lnot \theta(i,j',e')) \lor (\forall j \in [d] \exists e \in [m].\theta(i,j,e))\right)$.
Therefore, for each $i \in [l]$ and $q \leq p$, either one of the following holds:
\begin{enumerate}
 \item\label{emptycondition} there exists $r \leq q$ such that $\forall j' \in [d] \forall e' \in [m].\ r \Vdash \lnot \theta(i,j',e')$.
 \item\label{d-setcondition} there exists $r \leq q$ such that $\forall j \in [d]\exists e \in [m] \exists s \leq r.\ s \Vdash \theta(i,j,e)$.
\end{enumerate}
\item\label{residentisunique} $p \Vdash \forall i \in [l] \forall j \in [d] \forall e \neq e' \in [m].\ (\lnot \theta(i,j,e) \lor \lnot \theta(i,j,e'))$.
In other words, for each $i \in [l], j \in [d]$, and distinct $e,e' \in [m]$, no extension $q \leq p$ can force both $\theta(i,j,e)$ and $\theta(i,j,e')$ simultaneously.
\item\label{roomisunique} $p \Vdash \forall (i,j) \neq (i',j') \in [l]\times[d] \forall e \in [m].\ (\lnot \theta(i,j,e) \lor \lnot \theta(i',j',e))$.
In other words, for any distinct pairs $(i,j), (i',j') \in [l]\times[d]$ and $e \in [m]$, no extension $q \leq p$ can force both $\theta(i,j,e)$ and $\theta(i',j',e)$ simultaneously.
\item\label{roomtotal} $p \Vdash \forall e \in [m] \exists (i,j) \in [l] \times [d].\ \theta(i,j,e)$.
Therefore, for each $e \in [m]$ and $q \leq p$, there always exists some $r \leq q$ and $(i,j) \in [l] \times [d]$ such that $r \Vdash \theta(i,j,e)$.
\end{enumerate}

Now, work in $M$. (Recall that $M \models Th(\NN)$.)
By Lemma \ref{shallowDTdecidestheta}, we have a code of a family $\{T_{\theta(\iota,\kappa,\epsilon),(i,j,e)}\}_{i \in [l], j\in [d], e \in [m]}$ in $M$.
Let 
\[A_{i,j,e} := br_{1}(T_{\theta(\iota,\kappa,\epsilon),(i,j,e)}^{p}),\]
 regarded as a set of partial injections.
We have that each $A_{i,j,e}$ is an antichain of $\mathscr{P}$, and $height(A_{i,j,e}) \leq |{n}|^{C}$ for some constant $C>0$.

For each $e \in [m]$, $\bigcup_{(i,j) \in [l] \times [d]} A_{i,j,e}$ is again an antichain by (\ref{roomisunique}).
Therefore, applying Lemma \ref{refiningtreelemma} in $M$, we obtain a family $\{S_{e}\}_{e \in [m]}$ of $\{0,1\}$-labeled $injPHP$-trees such that each $S_{e}$ refines $\bigcup_{(i,j) \in [l] \times [d]} A_{i,j,e}$.
Without loss of generality, we may assume that $S_{e}^{p}=S_{e}$.
Note that $height(S_{e}) \leq 2|{n}|^{2C} < b_{0}$.
Moreover, for each $b \in br(S_{e_{0}})$, there uniquely exists $(i,j) \in [l]\times[d]$ and $s \in A_{i,j,e_{0}}$ such that $s \subseteq b$ as partial injections.
On the other hand, by (\ref{roomtotal}), there exist $r \leq bp$ and $(i_{0},j_{0}) \in [l] \times [d]$ such that $r \Vdash \theta(i_{0},j_{0},e_{0})$.
$T_{\theta(\iota,\kappa,\epsilon),(i_{0},j_{0},e_{0})}$ is of height $<b_{0}$, and $r$ is also of size $<b_{0}$, and $b_{0} < n^{o(1)}$. 
Hence, there exists a branch $a$ of $T_{\theta(\iota,\kappa,\epsilon),(i_{0},j_{0},e_{0})}$ such that $a || r$.
Since $r \Vdash \theta(i_{0},j_{0},e_{0})$, we have $a \in br_{1}(T_{\theta(\iota,\kappa,\epsilon),(i_{0},j_{0},e_{0})})=A_{i_{0},j_{0},e_{0}}$.
This implies $b || a \in \bigcup_{(i,j) \in [l] \times [d]} A_{i,j,e_{0}}$, and therefore, by the definition of $S_{e_{0}}$, $b$ is an extension of some partial injection in some $A_{i,j,e_{0}}$ ($(i,j) \in [l] \times [d]$).
Since $\bigcup_{(i,j) \in [l] \times [d]} A_{i,j,e_{0}}$ is an antichain, such $(i,j)$ is unique, and we denote them by $(i_{b},j_{b})$.
Let $\widetilde{S}_{e}$ be the $[l]\times[d]$-labeled $injPHP$-tree obtained by relabeling each branch $b$ of $S_{e}$ by $\langle i_{b},j_{b} \rangle$.

Similarly, for each $(i,j) \in [l] \times [d]$, $\bigcup_{e \in [m]} A_{i,j,e}$ is again an antichain by (\ref{residentisunique}), and we obtain a family $\{V_{i,j}\}_{(i,j) \in [l] \times [d]}$ of $\{0,1\}$-labeled $injPHP$-trees of height $\leq 2|{n}|^{2C}<b_{0}$ such that each $V_{i,j}$ refines $\bigcup_{e \in [m]} A_{i,j,e}$.
Without loss of generality, we may assume that $V_{i,j}^{p}=V_{i,j}$.
For each $b \in br(V_{i,j})$, exactly one of the following holds:
\begin{itemize}
\item $b \perp a$ for arbitrary $a \in \bigcup_{e \in [m]} A_{i,j,e}$.
In this case, applying (\ref{emptyord-set}) to $q \leq bp$, we see that the case (\ref{d-setcondition}) cannot happen, and (\ref{emptycondition}) holds.
It implies $b \perp a$ for arbitrary $a \in \bigcup_{j' \in [d], e' \in [m]} A_{i,j',e'}$.
Indeed, if $b || a$ holds for some $a \in A_{i,j',e'}$, (\ref{emptycondition}) holds for $q = abp$, which contradicts $ap \Vdash \theta(i,j',e')$.

\item $b \supseteq a$ as partial injections for some $e \in [m]$ and $a \in A_{i,j,e}$. 
In this case, $e$ is unique since $\bigcup_{e \in [m]} A_{i,j,e}$ is an antichain, and we denote it by $e_{b}$.
\end{itemize}

Let $\widetilde{V}_{i,j}$ be $([m] \sqcup\{\perp\})$-labelled $injPHP$-tree obtained by relabeling each branch $b$ of $V_{i,j}$ by: 
\begin{itemize}
 \item the symbol ``$\perp$'' if $b \perp a$ for arbitrary $a \in \bigcup_{e \in [m]} A_{i,j,e}$.
 \item the number $e_{b} \in [m]$ otherwise.
\end{itemize}  

For $(i,j) \in [l] \times [d]$, we define a $([m] \sqcup\{\perp\})$-labelled $injPHP$-tree $\widetilde{U}_{i,j}$ by:
\[\widetilde{U}_{i,j} := \widetilde{V}_{i,1} * \sum_{b \in br(\widetilde{V}_{i,1})\setminus br_{\perp}(\widetilde{V}_{i,1})} \widetilde{V}_{i,j}^{b}.\]
% $\widetilde{V}_{i,1}$ concatenated by $\widetilde{V}_{i,j}^{b}$ for each $b \in br(\widetilde{V}_{i,1})\setminus br_{\perp}(\widetilde{V}_{i,1})$.

Then each $\widetilde{U}_{i,j}$ again refines $\bigcup_{e \in [m]} A_{i,j,e}$.
Furthermore, each branch $d \in br(\widetilde{U}_{i,j})$ satisfies exactly one of the following:
\begin{itemize}
 \item $d \in br_{\perp}(\widetilde{V}_{i,1})$ and $d \perp a$ for arbitrary $a \in \bigcup_{j' \in [d], e' \in [m]} A_{i,j',e'}$.
 \item $d$ is of the form $d=bc^{b}$, where $b \in br_{e}(\widetilde{V}_{i,1})$ for some $e \in [m]$, and $c \in br_{e'}(\widetilde{V}_{i,j})$ for some $e' \in [m]$.
 We define $\widetilde{e}_{b}:=e'$.
\end{itemize}

Now, we define $[l]\times[d]\times[m]$-labeled $injPHP$-trees $(X_{i,j})_{i \in [l], j \in [d]}$ and $(Y_{e})_{e \in [m]}$ as follows:
\begin{itemize}
 \item $X_{e} := \widetilde{S}_{e}*\sum_{b \in br(\widetilde{S}_{e})} \widetilde{V}_{i_{b},j_{b}}^{b}$, and each branch $bc^{b}$ ($b \in br(\widetilde{S}_{e})$ and $c \in \widetilde{V}_{i_{b},j_{b}}$) is labeled by $\langle i_{b},j_{b}, e\rangle$.
\item $Y_{i,j} := \widetilde{V}_{i,j} * \sum_{c \in br(\widetilde{V}_{i,j}) \setminus br_{\perp}(\widetilde{V}_{i,j})} \widetilde{S}_{e_{c}}^{c}$, and each branch $cb^{c}$ ($c \in br_{e}(\widetilde{V}_{i,j})$ for some $e \in [m]$ and $b \in \widetilde{S}_{e_{c}}$) is labeled by $\langle i,j, e_{c}\rangle$, and each branch $c \in br_{\perp}(\widetilde{V}_{i,j})$ is again labeled by $\perp$.
\end{itemize}

 We see that $(X_{i,j})_{i \in [l],j \in [d]}$ and $(Y_{e})_{e \in [k]}$ satisfy the following:
\begin{itemize}
 \item For each $i$，$br_{\perp}(X_{i,1})= \cdots = br_{\perp}(X_{i,d}) = br_{1}(N_{i})$.
  \item For each $i,j,e$, $br_{\langle i,j,e \rangle}(X_{i,j})= br_{\langle i,j,e \rangle}(Y_{e})$ (as sets of partial injections). 
\end{itemize}

Now, we are ready to repeat the construction of a low-degree NS-refutation of $injPHP^{n+1}_{n}$ over the ring $\ZZ_{d}$ given in the proof of Theorem 53 in \cite{UCPvsinjPHPrevised} in $M$, which contradicts Razborov's linear degree lower bound for $injPHP^{n+1}_{n}$ (\cite{Razborov}) in $M$. (See also \cite{UCPvsinjPHPrevised} Corollary 25, and the statements necessary for this proof are all arithmetical and therefore hold in $M$.)

\end{proof}

\begin{rmk}\label{mostweakPHPisindependent}
With a little modification of the previous proof, actually we can show that 
\[T^{1}_{2}(R) + UCP^{l,d}_{k}(\Delta^{b}_{1}(R)) \not\vdash injPHP^{univ}_{n}(R),\]
 where $injPHP^{univ}_{n}(R)$ states that ``$R$ cannot code an injection from the whole universe into $[n]$.''
It suffices to:
\begin{itemize}
 \item change the definition of $\mathscr{P}$ to ``the set of all the small partial injections from $M$ to $[n]$'' and include $\{p \mid a \in \dom(p)\}$ for all $a \in M$ in $D_{0},D_{1},\ldots$, which establishes $M[G] \models \lnot injPHP^{univ}_{n}(R)$.
 \item consider shallow $injPHP$-trees over $([2^{|{n}|^{C}}],[n])$ for sufficiently large $C>0$ instead of over $([n+1],[n])$ for establishing $I \models UCP^{l,d}_{k}(\theta)$. ($C$ depends on $\theta$.)
 \end{itemize}
 This also explains the difficulty to lift the result to higher base theories, say, $T^{2}_{2}(R)$ instead of $T^{1}_{2}(R)$; it is well-known that 
 $T^{2}_{2}(R) \vdash injPHP^{2n}_{n}(R)$ (\cite{T22provesWPHP}).
 
\end{rmk}

\begin{rmk}
 The approach in \S \ref{The Main Result} can be rephrased in the model-theoretic forcing approach in this section as \cite{partiallydefinableforcing} \S 4.3; a forcing frame of partial injections having ``large bijection parts'' are considered here, and the $o(n)$-evaluation for depth $0+\frac{1}{2}$-formulae and $1+\frac{1}{2}$-formulae each corresponds to give maximal antichains for sharply-bounded formulae and $\Sigma^{b_{1}}(R)$-formulae respectively. 
\end{rmk}

\section{Acknowledgement}
The author would like to express sincere thanks to Jan Kraj\'{i}\v{c}ek, Mykyta Narusevych, and Ond\v{r}ej Je\v{z}il for their hospitality and stimulating questions and comments during my visit to Prague in 2023 and 2024, which motivated this research.
The author is also grateful for Pavel Pudl\'{a}k, Neil Thapen, Erfan Khaniki, and Dimitrios Tsintsilidas for their comments and feedbacks. 
We also would like to thank Toshiyasu Arai for his teachings and guidance to proof theoretic techniques.

This research was supported by:
\begin{itemize}
\item FoPM, WINGS Program, the University of Tokyo, and
\item JSPS KAKENHI Grant Number 22KJ1121, Grant-in-Aid for JSPS Fellows.
\end{itemize}

 \end{document}